\providecommand{\keywords}[1]{{\small \textbf{{Keywords:}} #1}}
\providecommand{\amsclass}[1]{{\small \textbf{{Mathematics Subject Classification (2010):}} #1}}
\newcommand{\er}{\mathbb{R}}
\newcommand{\R}{\mathbb{R}}
\newcommand{\mf}[1]{\mathbf{#1}}
\newcommand{\ub}{\mathbf{u}}
\newcommand \ben {\begin{equation}}
\newcommand \een {\end{equation}}
\theoremstyle{theorem}
\newtheorem{Teorema}{Theorem}[section]
\newtheorem{Lema}[Teorema]{Lemma}
\theoremstyle{definition}
\newtheorem{Definicao}[Teorema]{Definition}
\theoremstyle{remark}
\numberwithin{equation}{section}
\title{Semitrivial vs. fully nontrivial ground states in cooperative cubic Schr\"odinger systems with $d\ge3$ equations}
\author{Sim\~ao Correia, Filipe Oliveira and Hugo Tavares}
\begin{document}

\date{\today}
\maketitle

\begin{abstract}
\noindent
In this work we consider the weakly coupled Schr\"odinger cubic system
\[
\begin{cases}
\displaystyle -\Delta u_i+\lambda_i u_i= \mu_i u_i^{3}+ u_i\sum_{j\neq i}b_{ij} u_j^2 \\
u_i\in H^1(\R^N;\er), \quad i=1,\ldots, d,
\end{cases}
\]
where $1\leq N\leq 3$,  $\lambda_i,\mu_i >0$ and $b_{ij}=b_{ji}>0$ for $i\neq j$. This system admits semitrivial solutions, that is solutions $\mf{u}=(u_1,\ldots, u_d)$ with null components. We provide optimal qualitative conditions on the parameters $\lambda_i,\mu_i$ and $b_{ij}$ under which the \emph{ground state solutions} have all components nontrivial, or, conversely, are semitrivial.

\noindent
This question had been clarified only in the $d=2$ equations case. For $d\geq 3$ equations, prior to the present paper, only very restrictive results were known, namely when the above system was a small perturbation of the super-symmetrical case $\lambda_i\equiv \lambda$ and $b_{ij}\equiv b$. We treat the general case, uncovering in particular a much more complex and richer structure with respect to the $d=2$ case.
\end{abstract}

\medbreak

\noindent \keywords{Cooperative systems, cubic Schr\"odinger systems, existence and nonexistence results, gradient elliptic systems, ground states, semitrivial and fully nontrivial solutions}

\noindent   \amsclass{35J47, 35J50 (Primary); 35B08, 35B09, 35Q55}

\section{Introduction}
We are interested in the elliptic system of $d$ equations
 \begin{equation}
 \label{sistema}
\begin{cases}
\displaystyle -\Delta u_i+\lambda_i u_i= \mu_i u_i^{3}+ u_i\sum_{j\neq i}b_{ij} u_j^2 \\
u_i\in H^1(\R^N;\er), \quad i=1,\ldots, d,
\end{cases}
\end{equation}
in $\R^N$, $1\leq N\leq 3$, with $\lambda_i,\mu_i >0$ for every $i=1,\ldots, d$ and $b_{ij}=b_{ji}>0$ for $i\neq j$. This system arises naturally when looking for standing wave solutions $\Psi_i(x,t)=e^{-i\lambda_i t}u_i(x)$ of the cubic nonlinear Schr\"odinger system
\[
i\partial_t \Psi_i-\Delta \Psi_i= \mu_i|\Psi_i|^2\Psi_i+ \Psi_i \sum_{j\neq i} b_{ij} |\Psi_j|^2,\quad i=1,\dots, d.
\]
The parameters $\mu_i$ represent self-interactions within the same component, while $b_{ij}$ ($i\neq j$) express the strength and the type of interaction between different components $i$ and $j$. When $b_{ij}>0$, this interaction is said of cooperative type, modeling phenomena appearing in nonlinear optics (see \cite{Sirakov} and the physical references therein). On the other hand, a negative coefficient $b_{ij}$ denotes competition, a feature arising, for instance, when modeling the Bose-Einstein condensation (see for instance \cite{Timmermans}).

\medskip

The assumption $b_{ij}=b_{ji}$, which translates the fact that the interactions between components are symmetric, implies that the system is of gradient type, and solutions of \eqref{sistema} correspond then to the critical points of the $C^2$--action functional $I_d:(H^1(\R^N))^d\to \R$ defined by
\[
I_d(\ub)=I_d(u_1,\ldots, u_d):=\frac 12 \sum_{i=1}^d \|u_i\|_{\lambda_i}^2-\frac 1{4} \sum_{i=1}^d  \mu_i |u_i|^{4}_{4}-\frac{1}{2}\mathop{\sum_{i,j=1}^d}_{i<j}b_{ij}|u_iu_j|_2^2,
\] 
where 
\[
\|v\|_{\lambda_i}^2:=\int |\nabla v|^2+\lambda_i\int v^2,
\]
and $|\cdot|_p$ stands for the usual $L^p$-norm, $1\leq p\leq \infty$.
\\
Among all eventual solutions of \eqref{sistema}, some of the most relevant, both from a physical and mathematical point of view\footnote{The reader may refer for instance to the excellent introduction in \cite{Sirakov} for more details}, are the so called \emph{ground states} (or \emph{least action} solutions), that is, solutions achieving the \emph{ground state level}
\[
c:=\inf\{ I_d(\ub):\ \ub\neq \mf{0},\ I_d'(\ub)=0\}.
\]
In this work we will denote the set of all minimizers of this problem by $G$, that is,
\[
G:=\{\ub:\ \ub\neq \mf{0},\ I_d'(\ub)=0,\ I_d(\ub)=c \}.
\]
Under our assumptions, for all values of the parameters $\lambda_i$, $\mu_i$ and $b_{ij}$, the existence of ground state solutions is not an issue. In fact, there exists a non-negative radially decreasing ground state $\mathbf{u}\in \mathcal{N}_d$, and it is quite classical to check that the set of minimizers of the problem 
\[
\inf \{I_d(\mathbf{u})\,:\,\mathbf{u}\in\mathcal{N}_d\}
\]
is exactly $G$, where $\mathcal{N}_d$ is the Nehari manifold
\[
\mathcal{N}_d:=\{\mathbf{u}\in (H^1(\er^N))^d\,:\,\mathbf{u}\neq 0, \ I_d'(\mathbf{u})[\ub]=0\}. 
\]
For complete proofs of these facts, we refer for instance to \cite[Proposition 10]{Correia1} (see also \cite[Theorem 2.1]{MaiaMontefuscoPellacci}, or \cite[Theorem 0.1]{BartschWang} for $N\geq 2$ and \cite[Proposition 2.1]{OliveiraTavares} for $N\geq 1$). Observe that, by the maximum principle, all ground state solutions have signed components. Moreover, note that
$$\mathbf{u}\in\mathcal{N}_d\Rightarrow \tau_d(\mathbf{u}):=\sum_{i=1}^d \|u_i\|^2_{\lambda_i}-\Big( \sum_{i=1}^d \mu_i|u_i|_{4}^{4}+2\sum_{i<j}b_{ij}|u_iu_j|_2^2  \Big)=0,$$ hence, for every  $\mathbf{u}\in \mathcal{N}_d$,
\[
I_d(\mathbf{u}) =\frac 14\sum_{i=1}^d \|u_i\|^2_{\lambda_i} 
			   =\frac 14\Big( \sum_{i=1}^d \mu_i|u_i|_{4}^{4}+2\sum_{i<j}b_{ij}|u_iu_j|_2^2  \Big).
\]

\medskip

An interesting and much more challenging question than the problem of the existence of ground states is whether the system (\ref{sistema}) admits solutions $\mf{u}=(u_1,\ldots, u_d)$ such that $u_i\not \equiv 0$ for every $i$. Such solutions will be called \emph{fully nontrivial solutions}. On the other hand, solutions with at least one vanishing component will be said \emph{semitrivial}. We will be particularly interested in the question of whether the ground states are fully nontrivial or not, and for that reason we introduce the set
\[
G^*=\{\mf{u}: \ \mf{u} \text{ is a ground state solutions of \eqref{sistema} with } u_i\not\equiv 0 \ \forall i
\}.
\]
Up to now, this question had only been answered in some particular cases; our aim is to fully describe how, for the general system \eqref{sistema}, the parameters $\lambda_i,\mu_i$ and $b_{ij}$ influence ground states to be either fully nontrivial or semitrivial.

\bigskip

Before stating our main results, let us describe accurately what was known before the present paper. In the 2--equation case
\begin{equation}\label{sistema2}
\begin{cases}
 -\Delta u_1+\lambda_1 u_1= \mu_1 u_1^{3}+ b_{12} u_1u_2^2 \\
  -\Delta u_2+\lambda_2 u_2= \mu_2 u_2^{3}+ b_{12}u_2u_1^2,\\
\end{cases}
\end{equation}
after the pioneer works by Ambrosetti and Colorado \cite{AmbrosettiColorado2,AmbrosettiColorado}, Lin and Wei \cite{LinWei}, Sirakov \cite{Sirakov} and Maia, Montefusco and Pellacci \cite{MaiaMontefuscoPellacci}, this question was completely solved in a recent publication by Mandel (\cite{Mandel}): in \cite[Theorem 1]{Mandel}, it is proved that  there exists $\bar b:=\bar b(\lambda_2/\lambda_1)>0$ such that for $0<b_{12}<\bar b$ all ground states are semitrivial, while for $b_{12}>\bar b$ all ground states are fully nontrivial. Moreover, a precise characterization of the threshold $\bar b$ is provided in terms of a minimization problem that involves the unique (up to translation) positive solution of the single equation case.
\bigskip

The case of three or more equations, as will come out from our results, is much richer and more complex to study. Up to now, only results when the parameters coincide (or are very close) appear in the literature. More precisely, in Liu and Wang \cite[Corollary 2.3]{LiuWang}, for 
\[
\lambda_1=\ldots=\lambda_d \qquad \text{ and } b_{ij}\equiv b>d(d-1)\max_i\{\mu_i\}-\frac{d-1}{d}\sum_j \mu_j,
\] 
it is shown that $G=G^*$, that is, all ground states are fully nontrivial. This estimate was slightly improved to a technical condition that includes the case $b_{ij}\equiv b>\max\{\mu_1\ldots, \mu_d\}$ by Liu, Liu and Chang (see Theorem 1.7 and Remarks 1.2-(b) and 1.8-(a) in \cite{LiuLiuChang}). We also refer the reader to \cite{Chang} for the particular case of $d=3$, an to Theorem 1.6 and Remark 3 of \cite{Soave}. 

Recently, it was proven in \cite[Theorem 1]{Correia3} that, when $\lambda_1=\dots=\lambda_d$, these questions may be reduced to a maximization problem in $\R^d$ and to solving a linear system. This reduction allowed the construction of examples (see Section 6 in \cite{Correia3}) which gave evidence, for the first time, of the increase in complexity when one passes from $d=2$ to $d\ge3$ equations.

\bigskip

The general case, however, was never tackled, and this is our main contribution to this subject. We will state qualitatively what kind of combinations on the parameters give rise either to semitrivial or to fully nontrivial ground states. In particular, it will be evident from our analysis that the different families of parameters play distinct roles: while the choice of the $\mu_i$ coefficients can be somehow arbitrary, only some combinations between different $\lambda_i$, and also between different $b_{ij}$ allow fully nontrivial ground states to arise.

\medskip

Let us now describe our main results, which we divided into two different groups: existence results on one hand and nonexistence results on the other. Even though it would have been possible to present fewer theorems with more elaborated statements, we wish to clearly highlight the different roles played by the $\lambda_i$ and the $b_{ij}$ parameters on the existence of fully nontrivial ground states, and for this reason we approach these two families separately. From now on, we fix $\mu_1,\ldots, \mu_d>0$.\\
Our first class of results concerns the parameters $\lambda_i$. Before stating the first theorem, let us introduce the notion of ``$\alpha$-admissibility'':

\begin{Definicao}
 Let $\alpha> 1$ and $k\geq 2$. We say that a vector $\mf{a}=(a_1,\dots,a_k)\in\er^k$ is \it{$\alpha$-admissible} $({\mf{a}}\in\mathcal{A}_{\alpha})$ if 
 \[
 \max_{1\le i \le k} a_i<\alpha  \min_{1\le i \le k} a_i.
 \]
 \end{Definicao}
 
 \begin{Teorema}[Nonexistence Result I]\label{Tnaoexistencia1}
Let $d\ge 3$, $0<\lambda_1\le \lambda_2 \le \cdots \le \lambda_d$ and $b_{ij}\equiv b>0$.\\
There exists a constant $\Lambda=\Lambda(\lambda_1/\lambda_2)$ such that, if $\lambda_2 \Lambda \leq \lambda_i$ for some $i\geq 3$ and $b>\max\{\mu_1,\ldots, \mu_d\}$, then every ground state solution $\mathbf{u}$ of (\ref{sistema}) is such that $u_i\equiv \ldots u_d\equiv 0$. In particular, we have that if $(\lambda_2,\ldots, \lambda_d)\not\in \mathcal{A}_\Lambda$, then $G^*=\emptyset$.
\end{Teorema}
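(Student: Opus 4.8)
The plan is to show that when one $\lambda_i$ ($i\ge 3$) is much larger than $\lambda_2$ while $b>\max_k\mu_k$, the "cost" of keeping component $u_i$ nonzero exceeds the gain from coupling, forcing it to vanish at the ground state level. I would work on the Nehari manifold and compare two competing configurations: a ground state of the full $d$-system versus a ground state of a subsystem where the heavy components are dropped. First I would recall the scaling structure: for $\mathbf{u}\in\mathcal{N}_d$ one has $I_d(\mathbf{u})=\tfrac14\sum_i\|u_i\|_{\lambda_i}^2$, and the ground state level $c=c_d(\lambda_1,\dots,\lambda_d)$ is monotone and, crucially, each single-equation level scales like $\lambda_i^{(4-N)/2}$ times a fixed constant depending only on $\mu_i$ (from the rescaling $v(x)=\sqrt{\lambda_i}\,w(\sqrt{\lambda_i}x)$). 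So a lone heavy component $u_i$ alone already contributes a level growing as a positive power of $\lambda_i$.

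The core estimate I would establish is a lower bound: \emph{if a ground state $\mathbf{u}$ has $u_i\not\equiv 0$ for some $i\ge 3$ with $\lambda_i$ large, then $I_d(\mathbf{u})$ is bounded below by something that grows with $\lambda_i$}, whereas the semitrivial competitor built only from components $1,\dots,2$ (or more generally those with small $\lambda$) has a level bounded independently of $\lambda_i$. For the lower bound I would test the Nehari/stationarity conditions against $u_i$ itself: from $I_d'(\mathbf{u})[0,\dots,u_i,\dots,0]=0$ we get $\|u_i\|_{\lambda_i}^2=\mu_i|u_i|_4^4+\sum_{j\ne i}b|u_iu_j|_2^2$, and since $b>\mu_k$ for all $k$, the analysis from the $d=2$/supersymmetric literature (the threshold characterisation of Mandel, and the reduction in \cite{Correia3}) already tells us that in the two-component interaction the coupling helps; but here the point is that the \emph{self-energy} term $\lambda_i\int u_i^2$ cannot be compensated because $|u_iu_j|_2^2\le |u_i|_4^2|u_j|_4^2$ and the mass of $u_j$ is controlled by the level $c$, which itself we bootstrap. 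Concretely I expect: $c=I_d(\mathbf{u})\ge \tfrac14\lambda_i|u_i|_2^2$ on one hand, and Gagliardo–Nirenberg plus the Nehari identity give $|u_i|_2^2$ bounded below by a negative power of $\|u_i\|_{\lambda_i}^2$ unless $u_i\equiv0$, producing a contradiction once $\lambda_i/\lambda_2$ exceeds the threshold $\Lambda$.

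The threshold $\Lambda=\Lambda(\lambda_1/\lambda_2)$ would then be \emph{defined} as the supremum of ratios $\lambda_i/\lambda_2$ for which the heavy component can still survive, i.e.\ via the same type of minimization problem that defines Mandel's $\bar b$: one compares $c_d$ with the level $c_{d-(\text{heavy})}$ of the reduced system, and $\Lambda$ is exactly where these cross. The final sentence, "$(\lambda_2,\dots,\lambda_d)\notin\mathcal{A}_\Lambda$ implies $G^*=\emptyset$", is then immediate: $(\lambda_2,\dots,\lambda_d)\notin\mathcal{A}_\Lambda$ means $\max_{j\ge2}\lambda_j\ge\Lambda\min_{j\ge2}\lambda_j=\Lambda\lambda_2$, so some $\lambda_i$ with $i\ge3$ satisfies $\lambda_i\ge\Lambda\lambda_2$, and the first part kills $u_i$, hence no ground state is fully nontrivial. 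The main obstacle I anticipate is making the lower bound on $|u_i|_2^2$ (equivalently, ruling out that $u_i$ concentrates or spreads to evade the $\lambda_i$-penalty) uniform in the other components — this is where one must use that all components share the same nonlinearity exponent and that the total level $c$ is a priori bounded by the level of \emph{any} single light component, closing the loop.
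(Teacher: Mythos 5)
The proposal correctly identifies the goal (show the action level would grow with $\lambda_i$ if $u_i\not\equiv 0$, compare against a light subsystem), but it has a genuine gap precisely at the step you yourself flag as ``the main obstacle'', and the gap is not fixable by the tools you invoke. The claim that ``Gagliardo--Nirenberg plus the Nehari identity give $|u_i|_2^2$ bounded below by a negative power of $\|u_i\|_{\lambda_i}^2$ unless $u_i\equiv 0$'' is the single-equation intuition, and it breaks for the coupled system. For one equation, Nehari reads $\|u\|_\lambda^2=\mu|u|_4^4$, and GN does force $\|u\|_\lambda^2\gtrsim \lambda^{(4-N)/2}$, which is what you want. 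But in the coupled Nehari identity
\[
\|u_i\|_{\lambda_i}^2=\mu_i|u_i|_4^4+b\sum_{j\ne i}|u_iu_j|_2^2,
\]
the coupling term is only \emph{quadratic} in $u_i$ (roughly $|u_i|_2^2\cdot\|\sum_{j\neq i}u_j^2\|_\infty$), so a nonzero component $u_i$ that is small in $H^1$ can perfectly well be ``carried'' by the other components: the identity places no lower bound on $\|u_i\|_{\lambda_i}$. Using $|u_iu_j|_2^2\le|u_i|_4^2|u_j|_4^2$ and GN on $|u_i|_4$ also does not close, because the $L^4$ norms of the light components are only $O(b^{-1/2})$, and after cancellation you are left with an \emph{upper} bound on $|u_i|_2$, not a contradiction. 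Likewise, defining $\Lambda$ as ``the supremum of ratios for which the heavy component survives'' is circular: nothing in the argument shows that this supremum is finite, and it presupposes exactly what has to be proved.

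What the actual proof supplies and your proposal lacks is a \emph{uniform $L^\infty$ bound on the rescaled solutions} $U_{i,b}:=\sqrt{b}\,u_{i,b}$. The paper first shows $\|U_{i,b}\|_{\lambda_i}^2\le C(\lambda_1/\lambda_2)$ uniformly in $b$ and in $\lambda_3,\dots,\lambda_d$, by comparing the ground state level with that of a two-equation subsystem involving only $\lambda_1,\lambda_2$ (this uses the monotonicity Lemma~\ref{lemma:monotonicity_of_gsl}). It then runs a Brezis--Kato/Moser-type iteration on the rescaled equation to upgrade this to $|U_{i,b}|_\infty\le C$ uniformly for $b>\max_i\mu_i$. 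With this in hand, the conclusion is immediate and elementary: testing the equation for $u_{j,b}$ ($j\ge 3$) against $u_{j,b}$ itself and using $u_{k,b}^2=U_{k,b}^2/b\le C^2/b$ together with $\mu_j/b<1$ gives
\[
0\le\int|\nabla u_{j,b}|^2\le \int u_{j,b}^2\,(dC^2-\lambda_j),
\]
which forces $u_{j,b}\equiv 0$ as soon as $\lambda_j>\Lambda:=dC^2$. This is also how the constant $\Lambda$ is made explicit and why it depends only on $\lambda_1/\lambda_2$: the $L^\infty$ constant is controlled by the two-equation level. Without the $L^\infty$ estimate, the coupling term cannot be absorbed, and no Nehari/GN bookkeeping will produce the required lower bound on the cost of keeping $u_i$ alive.
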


The counter part of this nonexistence result is the following:

\begin{Teorema}[Existence Result I]\label{Tadmissibilidade2}
Let $d\geq 3$, $0<\lambda_1\le \lambda_2\le \cdots \le \lambda_d$ and $b_{ij}\equiv b$. Then, setting $\omega=\lambda_2/\lambda_1$, $\rho(d)=(d-2)/(d-1)$ and
\begin{equation}
\alpha=\alpha(\lambda_1/\lambda_2,d,N):=\left(1- \frac{\rho(d)-\rho(d-1)}{\sqrt{2\omega^2\frac{(\rho(d-1)+\omega)^2+\omega^2}{(\rho(d-1)+2\omega)^2}} + \rho(d)}\right)^{-\frac{2}{4-N}},
\end{equation}
if 
$$
(\lambda_2,\dots,\lambda_d)\in \mathcal{A}_{\alpha},
$$ 
there exists a constant $B=B(\lambda_i,\mu_i)>0$, such that, for $b>B$, $G=G^*$.
\end{Teorema}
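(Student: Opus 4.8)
## Proof strategy for Existence Result I

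The plan is to show that when the $\lambda_i$'s (for $i\ge 2$) are sufficiently clustered together — which is what $\alpha$-admissibility quantifies — and $b$ is large, a fully nontrivial competitor on the Nehari manifold beats every semitrivial configuration. The natural approach is to compare the ground state level $c$ with the \emph{semitrivial levels}: for each nonempty proper subset $J\subsetneq\{1,\dots,d\}$, let $c_J$ be the ground state level of the subsystem obtained by setting $u_i\equiv 0$ for $i\notin J$. A ground state is semitrivial precisely when $c=c_J$ for some such $J$. So it suffices to exhibit, for $b$ large, a point $\mathbf{u}\in\mathcal{N}_d$ with all components nonzero and $I_d(\mathbf{u})<\min_{J\subsetneq\{1,\dots,d\}} c_J$.

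First I would construct the competitor. The cheapest semitrivial configuration uses a single index: $c_{\{i\}} = I_1(\lambda_i,\mu_i)$, the action of the scalar ground state $U_{\lambda_i,\mu_i}$, and by scaling this equals $\lambda_i^{(4-N)/2}\mu_i^{-1}\,m$ for a universal constant $m=m(N)$; since $\lambda_1\le\lambda_i$ the minimum over singletons is controlled by $\lambda_1$. For the fully nontrivial competitor, I take a common profile: an Ansatz of the form $u_i=\sqrt{t_i}\,W$ where $W$ solves $-\Delta W+\bar\lambda W=W^3$ for a suitable averaged $\bar\lambda$ (or, more robustly, $u_i = \sqrt{t_i}\,U_{\lambda_1,1}$ rescaled), with the scalars $t_i>0$ chosen to sit on $\mathcal{N}_d$. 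On such an Ansatz, $I_d$ reduces to a finite-dimensional function of $\mathbf{t}=(t_1,\dots,t_d)$, and as $b\to\infty$ the off-diagonal terms $b_{ij}|u_iu_j|_2^2$ dominate, forcing the optimal $t_i$ to balance and driving $I_d(\mathbf{u})$ down like $O(1/b)$ (this is the mechanism already visible in the $d=2$ case of Mandel and in \cite{Correia3}). The quantitative content of the hypothesis $(\lambda_2,\dots,\lambda_d)\in\mathcal{A}_\alpha$ is exactly what guarantees that the \emph{error} incurred by using one common profile $W$ instead of the individual profiles $U_{\lambda_i,\mu_i}$ — which is governed by how much the $\lambda_i$ spread, through a term of size $\sim(\lambda_i/\lambda_j)^{(4-N)/2}-1$ — stays smaller than the gain from the coupling. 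The explicit formula for $\alpha$ is precisely the threshold at which this trade-off tips; I would reverse-engineer it from the two-index comparison (indices $1$ and the worst $i\ge 2$) exactly as in the $d=2$ analysis, then check it survives the passage to $d$ indices using $\rho(d)=(d-2)/(d-1)$ to absorb the combinatorics of the $\binom{d}{2}$ cross terms.

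The key steps in order: (i) record the scaling identities for scalar ground states and reduce each semitrivial level $c_J$ to an explicit expression in $\{\lambda_i,\mu_i\}_{i\in J}$; (ii) observe that $\min_J c_J$ is attained either at a singleton or, when $b$ is large, possibly at a two-element set, and get a lower bound for it independent of $b$ (for large $b$ the two-element subsystem levels also drop, so one must be slightly careful — but $\min_J c_J \ge \min_i c_{\{i\}}$ is too weak; rather one compares $c$ against $c_J$ for the specific $J$ that a putative semitrivial ground state would use, and iterates); (iii) build the common-profile competitor, compute $I_d$ restricted to the $\mathbf{t}$-slice, minimize in $\mathbf{t}$, and show the minimum is $<\min_J c_J$ for $b>B$ under $\alpha$-admissibility; (iv) conclude $c<c_J$ for all proper $J$, hence no ground state can be semitrivial, i.e.\ $G=G^*$.

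The main obstacle I anticipate is step (ii)–(iii) interplay: as $b\to\infty$ the relevant semitrivial competitors are \emph{not} the singletons but the subsystems with the cheapest clustered pair (or triple, etc.), whose levels themselves decay in $b$, so one cannot simply bound $\min_J c_J$ below by a $b$-independent constant. The honest approach is an induction on $d$: assuming the result for $d-1$ equations, the only semitrivial ground state candidates are those supported on a $(d-1)$-subset $J$; for each such $J$ one shows $c<c_J$ by plugging the fully-nontrivial-on-$J$ near-optimizer, extended by a small $(d-1+1)$-th bump with the right sign of interaction, back into $I_d$ and using that the extra coupling term is negative and of strictly larger order than the cost of the bump — and this is exactly where the precise constant $\alpha$, via $\rho(d)-\rho(d-1)$ appearing in its formula, enters to make the bookkeeping close. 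Getting this perturbative estimate sharp enough to match the stated $\alpha$ — rather than some cruder sufficient condition — is the technical heart of the argument.
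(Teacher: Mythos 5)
Your proposal correctly identifies the overall goal (beat every semitrivial level with a fully nontrivial Nehari competitor, for $b$ large) and correctly flags the main difficulty you foresee — that the semitrivial levels themselves decay as $b\to\infty$, so comparing against $b$-independent constants is too crude. However, your resolution of that difficulty, and the main mechanism you propose, are genuinely different from the paper's, and in fact the induction-on-$d$ strategy you describe is the one the authors use for the \emph{other} existence result, Theorem~\ref{Tadmissibilidade1}, not for Theorem~\ref{Tadmissibilidade2}. That induction route yields the cruder admissibility condition $(\lambda_1,\dots,\lambda_d)\in\mathcal{A}_{1+1/(d-2)}$ and does not naturally single out $\lambda_1$ the way Theorem~\ref{Tadmissibilidade2} does (note its hypothesis constrains only $(\lambda_2,\dots,\lambda_d)$, with $\lambda_1$ entering only through $\omega$).

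The paper's actual argument has three ingredients you do not mention. First, a scaling-plus-perturbation reduction (from \cite{Correia3}) to the normalized case $\mu_i=0$, $b=1$: this eliminates the $b\to\infty$ bookkeeping entirely, so there is no need to track how fast $c_J$ decays. Second, the monotonicity Lemma~\ref{lemma:monotonicity_of_gsl} is used to compare $c(1,\lambda_2,\dots,\lambda_d)$ and $c^{sem}(1,\lambda_2,\dots,\lambda_d)$ with their counterparts where all $\lambda_i$ ($i\ge 2$) are replaced by a single value, at the price of the explicit factor $(\lambda_d/\lambda_2)^{(4-N)/2}$; this is where the exponent $-2/(4-N)$ in $\alpha$ comes from. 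Third — and this is the key lemma you are missing — the classification Theorem~\ref{Tlambdaigual}: when several $\lambda_i$ coincide, the corresponding components of \emph{any} ground state are proportional to a single profile. Applied to the reduced symmetric problem, this pins down the semitrivial minimizer to the explicit form $(u_1,\pm(d-2)^{-1/2}u,\dots,\pm(d-2)^{-1/2}u,0)$, and the fully nontrivial competitor is then built simply by redistributing the same pair $(u_1,u)$ over all $d$ components as $(u_1,(d-1)^{-1/2}u,\dots,(d-1)^{-1/2}u)$ and projecting onto $\mathcal{N}_d$. The quantities $\rho(d)=(d-2)/(d-1)$ and $\rho(d-1)=(d-3)/(d-2)$ in the formula for $\alpha$ arise from comparing the cross-term contributions of these two symmetric configurations (not, as you guessed, from an inductive step), and the remaining factor $\sqrt{2\omega^2((\rho(d-1)+\omega)^2+\omega^2)/(\rho(d-1)+2\omega)^2}$ comes from a separate test-function estimate bounding $|u_1|_4$ in terms of $|u|_4$. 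Without the classification theorem one does not have the explicit structure of the semitrivial minimizer, and the common-profile Ansatz you propose would have to be optimized against an unknown competitor, which is why your plan, as stated, does not obviously close to the stated $\alpha$.
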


The proof of this last theorem uses a classification result which we consider to be of independent interest (see Theorem \ref{Tlambdaigual} ahead): it states that if two of the $\lambda_i$ coefficients are equal, then, in any ground state, the corresponding components are proportional. Observe that, although we do not exhibit the exact threshold constants, Theorems \ref{Tnaoexistencia1} and \ref{Tadmissibilidade2} are complementary, and in the qualitative sense optimal for large $b_{ij}$. For small $b_{ij}$ we typically have semitrivial ground states, and we will come to this by the end of this introduction (see Theorem \ref{Tnaoexistencia2} ahead and the paragraph afterwards).\\
\medbreak

 It is also possible to obtain a more refined $\alpha$--admissibility condition in the following particular case:

\begin{Teorema}[Existence Result II]
 \label{Tadmissibilidade1}

Let $d\geq 3$, $0<\lambda_1, \cdots,\lambda_d$ and $b_{ij}\equiv b>0$, with
\[ 
(\lambda_1,\cdots,\lambda_d)\in\mathcal{A}_{1+\frac1{d-2}}.
\]
Then there exists a constant $B=B(\lambda_i,\mu_i)>0$, such that, for $b>B$, $G=G^*$.
\end{Teorema}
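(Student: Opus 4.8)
The plan is to show that for all sufficiently large $b$ no ground state of \eqref{sistema} can have a vanishing component, by an instability mechanism for semitrivial critical points. For a nonempty $S\subseteq\{1,\dots,d\}$, let $c(S)$ denote the ground state level of the subsystem obtained by keeping only the equations indexed by $S$ (same $\mu_j,\lambda_j$, same $b$), so that $c=c(\{1,\dots,d\})$. Extending a ground state of the $S$-subsystem by zero produces a nonzero critical point of $I_{|S'|}$ for any $S'\supseteq S$, hence $c(S')\le c(S)$ whenever $S\subseteq S'$; in particular a ground state of \eqref{sistema} is semitrivial if and only if $c=c(S)$ for some proper $S\subsetneq\{1,\dots,d\}$. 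Since any proper $S$ lies inside a $(d-1)$-element set, it suffices to prove $c<c(S)$ for every $S$ with $|S|=d-1$. Finally, for such an $S$, picking a (nonnegative, radially decreasing) ground state $\mathbf v$ of the $S$-subsystem and setting $S^*=\{j\in S:v_j\not\equiv0\}$, the restriction $\mathbf v|_{S^*}$ is a \emph{fully nontrivial} ground state of the $S^*$-subsystem, so $c(S^*)=c(S)$. Hence everything reduces to the following claim: for every proper $S^*\subsetneq\{1,\dots,d\}$ carrying a fully nontrivial ground state $\mathbf v^*=(v_j^*)_{j\in S^*}$, and every $k\notin S^*$, one has $c<c(S^*)$ once $b$ is large.

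To prove the claim I would test $I_d$ along the curve $\mathbf w_t$ obtained by extending $\mathbf v^*$ by zero and replacing the $k$-th (zero) slot by $t\phi$, with $\phi\in H^1(\er^N)\setminus\{0\}$ to be chosen. Writing $A(\mathbf w)=\sum_i\|w_i\|_{\lambda_i}^2$ and $Q(\mathbf w)=\sum_i\mu_i|w_i|_4^4+2b\sum_{i<j}|w_iw_j|_2^2$, for each $t$ there is a unique $s(t)>0$ with $s(t)\mathbf w_t\in\mathcal N_d$, and $I_d(s(t)\mathbf w_t)=\max_{s>0}I_d(s\mathbf w_t)=A(\mathbf w_t)^2/(4Q(\mathbf w_t))$. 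Since $\mathbf v^*$ lies on its own Nehari manifold we have $A(\mathbf w_0)=Q(\mathbf w_0)=4c(S^*)$, and because the $k$-th slot is not coupled to the remaining zero slots a Taylor expansion gives
\[
I_d(s(t)\mathbf w_t)=c(S^*)+\tfrac12\Big(\|\phi\|_{\lambda_k}^2-b\int\sigma^*\phi^2\Big)t^2+O(t^4),\qquad \sigma^*:=\sum_{j\in S^*}(v_j^*)^2 .
\]
Therefore, if the quadratic form $\phi\mapsto\|\phi\|_{\lambda_k}^2-b\int\sigma^*\phi^2$ takes a negative value, then $s(t)\mathbf w_t\in\mathcal N_d$ has energy strictly below $c(S^*)$ for small $t>0$, whence $c<c(S^*)$. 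It remains to produce a destabilizing $\phi$.

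If $\lambda_k\le\lambda_j$ for some $j\in S^*$, I take $\phi=v_j^*$ (nonzero): testing the $j$-th equation of the $S^*$-subsystem against $v_j^*$ yields $b\int\sigma^*(v_j^*)^2=\|v_j^*\|_{\lambda_j}^2+(b-\mu_j)|v_j^*|_4^4\ge\|v_j^*\|_{\lambda_k}^2+(b-\mu_j)|v_j^*|_4^4>\|v_j^*\|_{\lambda_k}^2$ as soon as $b>\mu_j$, so the form is negative. In the remaining case $\lambda_k>\lambda_j$ for every $j\in S^*$, I take $\phi=\sqrt{\sigma^*}\in H^1$; using the pointwise (diamagnetic) bound $|\nabla\sqrt{\sigma^*}|\le\big(\sum_j|\nabla v_j^*|^2\big)^{1/2}$, the Nehari identity $\sum_{j\in S^*}\|v_j^*\|_{\lambda_j}^2=4c(S^*)$ and the Pohozaev identity of the $S^*$-subsystem (which, together with Nehari, gives $\sum_{j\in S^*}\lambda_j|v_j^*|_2^2=(4-N)c(S^*)$), one reduces the negativity of the form to
\[
\sum_{j\in S^*}(b-\mu_j)|v_j^*|_4^4>\sum_{j\in S^*}(\lambda_k-\lambda_j)|v_j^*|_2^2 .
\]
Bounding the left side from below, via Cauchy--Schwarz on the cross terms $\int(v_i^*)^2(v_j^*)^2\le|v_i^*|_4^2|v_j^*|_4^2$ together with $\big(\sum_j|v_j^*|_4^2\big)^2\le|S^*|\sum_j|v_j^*|_4^4$, by $(b-\max_j\mu_j)\,\frac{4c(S^*)}{\max_j\mu_j+b(|S^*|-1)}$, and bounding the right side from above (using the Pohozaev relation above and $\lambda_k-\lambda_j\le\lambda_{\max}-\lambda_{\min}$) by $\frac{4-N}{4}\big(\frac{\lambda_{\max}}{\lambda_{\min}}-1\big)\,4c(S^*)$, the inequality follows once
\[
\frac{b-\max_j\mu_j}{b(|S^*|-1)+\max_j\mu_j}>\frac{4-N}{4}\Big(\frac{\lambda_{\max}}{\lambda_{\min}}-1\Big).
\]
By the admissibility hypothesis $(\lambda_1,\dots,\lambda_d)\in\mathcal A_{1+\frac1{d-2}}$ we have $\lambda_{\max}/\lambda_{\min}-1<\tfrac1{d-2}$; since $\tfrac{4-N}{4}\le1$ and $|S^*|-1\le d-2$, the right-hand side is bounded by a constant strictly below $\tfrac1{d-2}\le\tfrac1{|S^*|-1}$, while the left-hand side tends to $\tfrac1{|S^*|-1}$ as $b\to\infty$; hence the inequality holds for all large $b$.

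Since there are only finitely many choices of $(d-1)$-subset $S$, of $S^*\subseteq S$ and of $k\notin S^*$, taking $B$ to be the maximum of the finitely many thresholds above (and $B\ge\max_i\mu_i$) gives, for $b>B$, that $c<c(S)$ for every proper $S$; hence no ground state is semitrivial, i.e.\ $G=G^*$. The main obstacle is the second case above: it is precisely there that $\alpha$-admissibility is genuinely used, and carrying it out cleanly requires the Nehari and Pohozaev identities together with enough regularity of the ground states of the sub-systems (part of the quoted existence theory), a careful chaining of the Cauchy--Schwarz/Sobolev-type estimates so that the final numerical inequality comes out with the constant $1+\tfrac1{d-2}$, and some bookkeeping to make the threshold $B$ uniform over $S,S^*,k$. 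One also has to verify the minor points that $\sqrt{\sigma^*}\in H^1$ and that $\mathbf w_t$ can indeed be normalized on $\mathcal N_d$.
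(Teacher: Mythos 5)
Your proof is correct, and while it is built on the same basic mechanism as the paper's---namely, showing that a fully nontrivial vector of an $S$-subsystem, extended by a small $k$-th component, can be Nehari-normalized to lie strictly below the semitrivial level---it differs from the paper's argument in two substantive ways. First, the paper organizes this by induction on $d$, arranging to work always with the smallest $(d-1)$-level $c^{sem}=c(\{1,\dots,d-1\})$; you avoid the induction and instead treat all proper subsets $S$ directly, passing to the support $S^*$ of a ground state of the $S$-subsystem so as to ensure full nontriviality there. Second, and more importantly, the ``hard'' case in which $\lambda_k>\lambda_j$ for every $j$ in the subsystem is handled with a different test function: the paper takes $w=v_{j_0}^*$ where $j_0$ maximizes $|v_j^*|_4$, and closes the estimate with the single equation for $v_{j_0}^*$ plus a Young-type inequality; you take $\phi=\sqrt{\sigma^*}=\bigl(\sum_{j}(v_j^*)^2\bigr)^{1/2}$, which after the diamagnetic-type gradient bound, the Nehari identity, and the Pohozaev identity $\sum_j\lambda_j|v_j^*|_2^2=(4-N)c(S^*)$ reduces everything to a clean comparison between $\sum_j(b-\mu_j)|v_j^*|_4^4$ and $\sum_j(\lambda_k-\lambda_j)|v_j^*|_2^2$. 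Your route requires more machinery (Pohozaev, hence the decay/regularity of solutions, which is available here; and the pointwise bound $|\nabla\sqrt{\sigma^*}|\le(\sum_j|\nabla v_j^*|^2)^{1/2}$), but it actually buys a strictly weaker admissibility hypothesis: your final sufficient condition as $b\to\infty$ reads $\lambda_{\max}/\lambda_{\min}<1+\frac{4}{(4-N)(d-2)}$, which for $1\le N\le3$ is strictly less restrictive than the paper's $\mathcal A_{1+\frac1{d-2}}$. The remaining bookkeeping you flag (uniformity of the threshold over the finitely many pairs $(S^*,k)$, and $\sqrt{\sigma^*}\in H^1$) is indeed routine and is exactly as you describe.

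Two minor points worth tightening in a write-up. In the reduction, it suffices to produce, for each relevant $S^*$, \emph{some} index $k\notin S^*$ (not every $k$) with the destabilization property, since all you need in the end is $c<c(S)$ for each $|S|=d-1$; checking all $k$ is harmless but not necessary. And in the degenerate case $|S^*|=1$ the quantity $1/(|S^*|-1)$ is formally $+\infty$; there the Cauchy--Schwarz on cross terms is vacuous and the bound becomes $(b-\mu_{j_0})\cdot 4c(S^*)/\mu_{j_0}$, which dominates for large $b$ with no admissibility constraint at all---so that case is automatic, as you expect.
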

We observe that the last two results do not correspond to a ``small'' perturbation of the $\lambda_1=\ldots=\lambda_d$ case, as in \cite[Remark 2.2]{LiuWang}. On the other hand, Corollary 3 of \cite{Correia3} is similar in nature to our Theorem \ref{Tadmissibilidade1}, but it is more restrictive on the admissibility condition.

 The proofs of Theorems \ref{Tadmissibilidade2} and \ref{Tadmissibilidade1} use different arguments: while the one of Theorem \ref{Tadmissibilidade2} is based on a technical argument analogous to the one of \cite[Corollary 3]{Correia3}, the proof of Theorem \ref{Tadmissibilidade1} relies on an induction argument similar to the one used in \cite{OliveiraTavares}.

\medskip

Turning now to the results regarding the coefficients $b_{ij}$, we have the following nonexistence result which states that if some of the $b_{ij}$ are too far apart from the others, then all ground states are necessarily semitrivial.
 
 \begin{Teorema}[Nonexistence Result II]\label{Tbetanaoadmissivel2}
Take a subset $P\subset \{1,\dots, d\}$ with $\#P\ge 2$. There exists a constant $B=B((\lambda_i, \mu_i)_{1\le i\le d}, (b_{ij})_{1\le i,j\le d, (i,j)\notin P^2})$ such that, if $\min_{(i,j)\in P^2} b_{ij}>B$, then any ground state $\mf{u}$ of system \eqref{sistema} satisfies $u_{i_0}\equiv 0$ for every $i_0\not\in P$. In particular, $G^*=\emptyset$.
\end{Teorema}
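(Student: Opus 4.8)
The plan is a two-sided energy estimate. On the one hand, switching on a very strong coupling among the components indexed by $P$ should drive the ground state level $c$ to $0$; on the other hand, any ground state carrying a nonzero component outside $P$ must have energy bounded below by a constant that does \emph{not} feel that coupling. Comparing the two gives the result. Throughout I would write $\beta:=\min\{b_{ij}:i\neq j,\ i,j\in P\}$ and let $M$ be the maximum of the ``frozen'' coefficients $b_{ij}$ with $(i,j)\notin P^2$; one may assume $P\subsetneq\{1,\dots,d\}$, the remaining case being vacuous.

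\emph{Upper bound $c\le C_1/\beta$.} Since $\#P\ge 2$, fix two distinct indices $i_1,i_2\in P$ and a fixed nonzero radial function $w\in H^1(\R^N)$. For $t>0$, let $\mathbf{u}^\ast$ be the $d$-vector with $tw$ in slots $i_1,i_2$ and $0$ elsewhere, and choose $t^2=(\|w\|_{\lambda_{i_1}}^2+\|w\|_{\lambda_{i_2}}^2)/\big(|w|_4^4(\mu_{i_1}+\mu_{i_2}+2b_{i_1i_2})\big)$, so that $\mathbf{u}^\ast\in\mathcal{N}_d$ (the Nehari identity $I_d'(\mathbf{u}^\ast)[\mathbf{u}^\ast]=0$ is exactly this choice of $t$, the cross terms with the vanishing components being absent). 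Then
\[
c\le I_d(\mathbf{u}^\ast)=\tfrac14\big(t^2\|w\|_{\lambda_{i_1}}^2+t^2\|w\|_{\lambda_{i_2}}^2\big)=\frac{\big(\|w\|_{\lambda_{i_1}}^2+\|w\|_{\lambda_{i_2}}^2\big)^2}{4\,|w|_4^4\,(\mu_{i_1}+\mu_{i_2}+2b_{i_1i_2})}\le\frac{C_1}{\beta},
\]
with $C_1=C_1(\lambda_{i_1},\lambda_{i_2},\mu_{i_1},\mu_{i_2},w)>0$, because $b_{i_1i_2}\ge\beta$. (One may equivalently say that $c$ lies below the ground state level of the two-component subsystem on $\{i_1,i_2\}$, which tends to $0$ as $b_{i_1i_2}\to\infty$.)

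\emph{Lower bound when a component sits outside $P$.} Suppose, for contradiction, that some ground state $\mathbf{u}$ has $u_{i_0}\not\equiv 0$ for an index $i_0\notin P$. Testing the $i_0$-th equation of \eqref{sistema} against $u_{i_0}$ yields $\|u_{i_0}\|_{\lambda_{i_0}}^2=\mu_{i_0}|u_{i_0}|_4^4+\sum_{j\ne i_0}b_{i_0j}|u_{i_0}u_j|_2^2$, and since $i_0\notin P$ every pair $(i_0,j)$ lies outside $P^2$, so $b_{i_0j}\le M$. Using Hölder ($|u_{i_0}u_j|_2^2\le|u_{i_0}|_4^2|u_j|_4^2$), the Sobolev embedding $H^1\hookrightarrow L^4$ valid for $1\le N\le 3$ in the form $|v|_4^2\le S_i\|v\|_{\lambda_i}^2$ (put $S^\ast:=\max_iS_i$), dividing by $\|u_{i_0}\|_{\lambda_{i_0}}^2>0$, and then bounding each $\|u_k\|_{\lambda_k}^2$ by $\sum_k\|u_k\|_{\lambda_k}^2=4I_d(\mathbf{u})=4c$ (valid since $\mathbf{u}\in\mathcal{N}_d$), one reaches $1\le 4c\,(S^\ast)^2\big(\mu_{i_0}+(d-1)M\big)$, i.e. $c\ge c_0:=\big(4(S^\ast)^2(\max_i\mu_i+(d-1)M)\big)^{-1}>0$, a quantity depending only on $(\lambda_i,\mu_i)_i$, $N$ and the frozen coefficients $(b_{ij})_{(i,j)\notin P^2}$.

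\emph{Conclusion.} Take $B:=C_1/c_0$, which has exactly the dependence claimed. If $\min_{(i,j)\in P^2}b_{ij}=\beta>B$, the two bounds give $c\le C_1/\beta<c_0\le c$, a contradiction; hence every ground state $\mathbf{u}$ satisfies $u_{i_0}\equiv 0$ for all $i_0\notin P$, and since $P$ is a proper subset this forces $G^\ast=\emptyset$. The only point requiring genuine care is the lower bound: its constant $c_0$ must be independent of $\beta$, which works precisely because the equation for the \emph{single} component $u_{i_0}$, $i_0\notin P$, involves only the frozen couplings $b_{i_0j}$ (together with $\mu_{i_0}$ and the Sobolev constants, all $\beta$-independent). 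The upper bound is a soft test-function computation, and I would expect no serious obstacle beyond bookkeeping the constants.
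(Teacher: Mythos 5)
Your proof is correct and mirrors the paper's strategy: you establish a two-sided energy comparison, namely an upper bound $c\lesssim 1/\beta$ coming from the coupling inside $P$ alone, and a lower bound $c\ge c_0>0$, independent of $\beta$, extracted from the Nehari identity for any nonzero component $u_{i_0}$ with $i_0\notin P$ combined with H\"older and Sobolev embedding. The only cosmetic difference is in how the upper bound is obtained: the paper passes to the $P$-subsystem, invokes the monotonicity lemma of its Appendix to compare with the case $\mu_i=0$, $b_{ij}\equiv b$, and then scales out $b$, whereas you compute directly with an explicit two-component test function supported on two indices of $P$ and project it onto the Nehari manifold --- both routes yield the same $O(1/\beta)$ bound, and the constant you obtain has exactly the dependence claimed in the statement.
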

 As a counter part, we will prove the following complementary existence result:
 \begin{Teorema}[Existence Result III]
\label{existbetageral}
 Consider the system \eqref{sistema} with $d\geq 3$, $\lambda=\lambda_1=\cdots=\lambda_d$ and $b_{ij}=b_{ji}>0$. Suppose that 
 \[
 \alpha:=\min_{i} \left(\min_j b_{ij}- \mu_i\right)>0
 \] 
 and 
 \[
  \max_{\substack{1\leq i\leq d\\ k\neq j}}|b_{ij}-b_{ik}|<\frac{\alpha}{d-2}.
  \]
 Then $G=G^*$.
\end{Teorema}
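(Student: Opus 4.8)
The plan is to use the hypothesis $\lambda_1=\dots=\lambda_d=:\lambda$ to collapse the problem to a finite-dimensional one, in the spirit of \cite{Correia3}, and then to settle that finite-dimensional problem with an elementary but carefully calibrated estimate. First I would invoke the classification result (Theorem~\ref{Tlambdaigual}): since all the $\lambda_i$ coincide, the components of any ground state $\mathbf u$ are pairwise proportional, and, the nonzero ones being positive solutions of one and the same scalar equation $-\Delta v+\lambda v=\kappa v^3$, they must all be multiples of a common translate of the ground state $w$ of $-\Delta w+\lambda w=w^3$; thus $\mathbf u=(t_1w(\cdot-y),\dots,t_dw(\cdot-y))$ with $t_i\ge 0$. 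Writing $s_i:=t_i^2$ and letting $M=(m_{ij})$ be the symmetric matrix with $m_{ii}=\mu_i$, $m_{ij}=b_{ij}$ for $i\ne j$, one checks that $\mathbf u$ is a critical point precisely when $(Ms)_i=1$ for every $i$ in the support of $s$, and that $I_d(\mathbf u)=\tfrac14\|w\|_\lambda^2\sum_i s_i$. This is exactly the reduction of \cite[Theorem~1]{Correia3}, and a short computation turns it into the statement $c=\tfrac14\|w\|_\lambda^2\big/\max_{\sigma\in\Delta}q(\sigma)$, where $\Delta\subset\R^d$ is the standard simplex and $q(\sigma):=\sigma^{\mathsf T}M\sigma$, the ground states of \eqref{sistema} corresponding exactly to the maximizers of $q$ on $\Delta$. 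In this reformulation, $G=G^*$ is equivalent to the assertion that \emph{every maximizer of $q$ on $\Delta$ lies in the relative interior}, i.e.\ has all coordinates strictly positive.

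To prove this interiority I would argue by contradiction via a one-parameter slide. Suppose $s^*$ maximizes $q$ on $\Delta$ but $R:=\operatorname{supp}(s^*)\subsetneq\{1,\dots,d\}$, so $|R|\le d-1$; pick any $j\notin R$ and set $s_\varepsilon:=(1-\varepsilon)s^*+\varepsilon e_j\in\Delta$. Since $M$ is symmetric, the derivative of $q(s_\varepsilon)$ at $\varepsilon=0$ equals $2[(Ms^*)_j-q(s^*)]$, so it is enough to show $(Ms^*)_j>q(s^*)$ to contradict maximality. Using $(Ms^*)_j=\sum_{i\in R}b_{ij}s_i^*$ and $q(s^*)=\sum_{i\in R}s_i^*(Ms^*)_i$, the goal becomes $\sum_{i\in R}s_i^*\,[\,b_{ij}-(Ms^*)_i\,]>0$.

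The crucial step, and the one I expect to require the most care, is the estimate of $(Ms^*)_i=\mu_i s_i^*+\sum_{i'\in R\setminus\{i\}}b_{ii'}s_{i'}^*$ in which the two hypotheses must fit together with exactly the right constants. The hypothesis $\alpha=\min_i(\min_j b_{ij}-\mu_i)>0$ gives $\mu_i\le b_{ij}-\alpha$; and since for $i'\in R\setminus\{i\}$ the entries $b_{ii'}$ and $b_{ij}$ share the first index $i$ (with $i',j\ne i$ and $i'\ne j$), the admissibility hypothesis gives $b_{ii'}<b_{ij}+\tfrac{\alpha}{d-2}$. Plugging these in and using $\sum_{i\in R}s_i^*=1$ yields
\[
(Ms^*)_j-q(s^*)\ \ge\ \frac{\alpha(d-1)}{d-2}\sum_{i\in R}(s_i^*)^2-\frac{\alpha}{d-2}.
\]
Now the pigeonhole bound $\sum_{i\in R}(s_i^*)^2\ge 1/|R|\ge 1/(d-1)$ — this is where the restriction $|R|\le d-1$ enters — makes the right-hand side nonnegative, the matching between the $\tfrac1{d-2}$ supplied by the admissibility condition and the $\tfrac1{d-1}$ supplied by the support size being precisely what makes the argument close. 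Strictness is then recovered by a case split: if $|R|=1$, say $R=\{i_0\}$, then directly $(Ms^*)_j-q(s^*)=b_{i_0 j}-\mu_{i_0}\ge\alpha>0$; if $|R|\ge 2$ then $s_i^*<1$ for all $i\in R$ and $s_{i'}^*>0$ for every $i'\in R\setminus\{i\}$, so the strict inequality $b_{ii'}<b_{ij}+\tfrac\alpha{d-2}$ propagates through and gives $(Ms^*)_j-q(s^*)>0$ as well. Since $d\ge3$ these two cases cover all proper supports $R$, so in every case the maximality of $s^*$ is contradicted; hence all maximizers of $q$ on $\Delta$ have full support and $G=G^*$. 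Beyond this estimate I do not foresee real obstacles: the finite-dimensional reduction is available once Theorem~\ref{Tlambdaigual} is invoked, and the remaining work is only the bookkeeping just described.
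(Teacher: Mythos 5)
Your proof takes a genuinely different route from the paper's. The paper argues by induction on the number of equations, following the template of Theorem~\ref{Tadmissibilidade1}: given a semitrivial ground state $(u_1,\dots,u_{d-1},0)$, the authors choose the test function $w=u_1$ (after arranging $|u_1|_4\ge |u_i|_4$), multiply the $u_1$-equation by $u_1$, and check that the inequality $\|w\|_\lambda^2<\sum_{i}b_{id}\int u_i^2 w^2$ holds under the stated hypotheses after two elementary estimates. You instead reduce to a finite-dimensional maximization on the simplex and run a one-parameter slide argument there; the derivative computation, the way you combine the two hypotheses to bound $(Ms^*)_i$, and the pigeonhole bound $\sum_{i\in R}(s_i^*)^2\ge 1/(d-1)$ all check out, including the case split $|R|=1$ versus $|R|\ge 2$ for strictness. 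Your route is arguably more transparent: it makes visible why the $\tfrac1{d-2}$ in the hypothesis matches the $\tfrac1{d-1}$ from the support size, and it dispenses with the inductive scaffolding.

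However, there is a citation error that creates a real (though fixable) gap as written: you ``invoke Theorem~\ref{Tlambdaigual}'' for the proportionality of components, but Theorem~\ref{Tlambdaigual} is stated only under $b_{ij}\equiv b$, which is precisely \emph{not} the setting of Theorem~\ref{existbetageral}. The result that actually covers $\lambda_1=\cdots=\lambda_d$ with general symmetric $b_{ij}$ is \cite[Theorem~1]{Correia3}, which you also mention; that is the reference you need, and the reduction you describe (ground states $\leftrightarrow$ maximizers of $q$ on $\Delta$, and $G=G^*$ $\Leftrightarrow$ full-support maximizers) does follow from it. Alternatively you could note that Step~1 of the proof of Theorem~\ref{Tlambdaigual} adapts verbatim to equal $\lambda_i$ and arbitrary symmetric $b_{ij}$ by replacing the function $f$ there with your $q$. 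Either fix makes the proof correct; as it stands, the appeal to Theorem~\ref{Tlambdaigual} does not apply.
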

Observe that the bound on the distance between the parameters $b_{ij}$ increases with the distance between the smallest $b_{ij}$ and the largest $\mu_i$ is also increased. Once again, this result does not constitute a perturbation of the super-symmetric case $b_{ij}\equiv b$.

\bigskip

\bigskip

In conclusion, let us point out that, in a sense, the known results for the 2-equation case extend to $3$ or more equations in the following {\it a priori} non obvious way:
\begin{itemize}
\item In terms of the $\lambda_i$ coefficients, in order to get fully nontrivial ground states, the two lowest coefficients (say $\lambda_1$ and $\lambda_2$, $\lambda_1\leq \lambda_2$) can be chosen arbitrarily, while the remaining ($\lambda_i$, $i\geq 3$) cannot lie too far apart from $\lambda_2$, by a quantity depending on $\lambda_1/\lambda_2$.\\
Heuristically, larger $\lambda_i$ make the action larger. Therefore the conclusion is that the components $u_1$ and $u_2$ of the ground states (which are the components associated to $\lambda_1$ and $\lambda_2$) can always ``survive'' (i.e. $u_1,u_2\not\equiv 0$) when these parameters increase,  while the non-nullity of the remaining components $u_i,\ i\geq 3$, will depend on how large the respective $\lambda_i/\lambda_2$ quotients are.
\item In terms of the $b_{ij}$ coefficients:  Theorem \ref{Tbetanaoadmissivel2} states in particular that if for some $i_0$ all $b_{ij}$, with $i,j\neq i_0$, are sufficiently large compared with $b_{i_0j},\ j=1,\dots,d$, then $G^*=\emptyset$.\\Heuristically, since a larger $b_{ij}$ implies a lower action, as soon as (let's say) $b_{12}$ becomes much larger than the remaining $b_{ij}$, the lowest action only takes in consideration the first two components, and all of the remaining components of the ground states become null.\\
Hence, Theorem \ref{existbetageral} is optimal in the sense that, as they increase, all the $b_{ij}$ coefficients must remain grouped in order for a fully nontrivial ground state to exist. This is in contrast with the $\lambda_i$ coefficients: the smaller one does not need to stay packed with the remaining ones in order for fully nontrivial ground states to exist.
\end{itemize}
Also, notice that our results show how much, in general, the elements of a certain class of parameters can be apart from each other depending on the remaining classes, and that there exists fully nontrivial ground states even in situations that do not correspond to a perturbation of the super-symmetrical case.
\\
\\
Finally, note that all the existence results stated in this paper ask for large values of the parameters $b_{ij}$. This is reasonable  in the sense that, for small values of these parameters  with respect to $\mu_1,\ldots, \mu_d$, the ground states of \eqref{sistema} are semitrivial. This was proved in the 2-equation case (see \cite{MaiaMontefuscoPellacci,Mandel}), and known in the general case for $b_{ij}\equiv b$ sufficiently small (see \cite[Proposition 6]{Correia3}). We improve this result, by once again giving an explicit bound on the size of $b_{ij}\equiv b$:
\begin{Teorema}[Nonexistence Result III]\label{Tnaoexistencia2} Let $b_{ij}\equiv b>0$. Assume, without loss of generality, that $\mu_1\leq\mu_2\leq\dots\leq\mu_d$.\\
Then, if $$b<2^{1-\frac d2}\sqrt{{\mu_1}\mu_d},$$ $G^*=\emptyset$.
\end{Teorema}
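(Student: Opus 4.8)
\section*{Proof proposal}

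To prove that $G^*=\emptyset$ it is enough to show that no ground state can be fully nontrivial, so the plan is to argue by contradiction. Suppose $\mathbf{u}=(u_1,\dots,u_d)$ is a ground state with all $u_i\not\equiv 0$, and set $A_i:=\|u_i\|_{\lambda_i}^2>0$, $s_i:=|u_i|_4^2>0$, and $T:=\sum_{i=1}^d A_i$. Since $\mathbf{u}\in\mathcal N_d$ realises $c=\inf_{\mathcal N_d}I_d$, the Nehari identity $I_d(\mathbf{u})=\tfrac14\sum_i\|u_i\|_{\lambda_i}^2$ gives $c=I_d(\mathbf{u})=T/4$. I would then test $\mathbf u$ against two families of competitors in $\mathcal N_d$: the ``one-component'' projections of $\mathbf u$, which force each $\mu_i s_i^2$ to be small; and the Nehari identity of the single equation carrying the \emph{largest} coefficient $\mu_d$, which forces the coupling term attached to $u_d$ to be large. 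Balancing these turns out to be impossible once $b$ is small.

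First, fix $i$: the vector $\mathbf v^{(i)}$ whose $i$-th entry is $t_i u_i$, with $t_i^2=A_i/(\mu_i s_i^2)$, and whose remaining entries vanish, lies in $\mathcal N_d$ (all mixed terms are identically zero) and satisfies $I_d(\mathbf v^{(i)})=A_i^2/(4\mu_i s_i^2)$. Since $I_d(\mathbf v^{(i)})\ge c=T/4$, this yields
\[
\sqrt{\mu_i}\,s_i\le \frac{A_i}{\sqrt T},\qquad i=1,\dots,d.
\]
Next, a ground state being a genuine solution of \eqref{sistema}, I would test the $d$-th equation with $u_d$, obtaining $A_d=\mu_d s_d^2+b\sum_{j\ne d}|u_d u_j|_2^2$, and hence, by the previous display applied to $i=d$,
\[
b\sum_{j\ne d}|u_d u_j|_2^2=A_d-\mu_d s_d^2\ge A_d-\frac{A_d^2}{T}=\frac{A_d(T-A_d)}{T}>0.
\]
Finally, I would bound the same sum from above by the Cauchy--Schwarz inequality $|u_d u_j|_2^2\le |u_d|_4^2|u_j|_4^2=s_d s_j$, the first display, and $\mu_j\ge\mu_1$:
\[
\sum_{j\ne d}|u_d u_j|_2^2\le s_d\sum_{j\ne d}s_j\le \frac{A_d}{\sqrt{\mu_d T}}\cdot\frac{1}{\sqrt{\mu_1 T}}\sum_{j\ne d}A_j=\frac{A_d(T-A_d)}{T\sqrt{\mu_1\mu_d}}.
\]
Comparing the last two displays and cancelling the strictly positive factor $A_d(T-A_d)/T$ forces $b\ge\sqrt{\mu_1\mu_d}$, which contradicts the hypothesis (since $2^{1-d/2}\le 1$ for $d\ge 2$, one has $b<2^{1-d/2}\sqrt{\mu_1\mu_d}\le\sqrt{\mu_1\mu_d}$). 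Thus every ground state has a vanishing component and $G^*=\emptyset$; in fact the argument gives the conclusion already under the weaker assumption $b<\sqrt{\mu_1\mu_d}$.

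The structural ingredients are minimal --- only that a ground state attains the Nehari level and is a true critical point of $I_d$ (the latter being needed to test the $d$-th equation with $u_d$) --- so I do not expect a serious obstacle. The one point requiring care is the choice to peel off precisely the equation with the \emph{largest} self-interaction coefficient $\mu_d$, so that the favourable factor $1/\sqrt{\mu_d}$ coming from the control of $s_d$ is not cancelled by the factor $1/\sqrt{\mu_1}$ lost when estimating the remaining $s_j$. A cruder treatment of the mixed norms $|u_i u_j|_2$, or of the sum $\sum_{j\ne d}s_j$, only produces weaker thresholds such as $\sqrt{\mu_1\mu_2}$ or $\min_i\mu_i$, or constants degrading with $d$.
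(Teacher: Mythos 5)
Your proof is correct, and it follows the same underlying idea as the paper's --- compare the ground state against single-component vectors on the Nehari manifold, then exploit the equation satisfied by one component --- but your execution is sharper. The paper projects only onto the first component $(tu_1,0,\dots,0)$, combines the resulting Nehari inequality with the equation for $u_1$, and then squares; the squaring introduces the lossy estimate $\bigl(\sum_{j\ge 2}|u_j|_4^2\bigr)^2\le 2^{d-2}\sum_{j\ge 2}|u_j|_4^4$, which is where the dimensional factor $2^{1-d/2}$ comes from. You instead apply the single-component projection to \emph{every} index, extracting the uniform consequence $\sqrt{\mu_i}\,s_i\le A_i/\sqrt T$, and then use the equation for $u_d$ (the component with the \emph{largest} $\mu$) combined additively with Cauchy--Schwarz, so that nothing is squared and the strictly positive factor $A_d(T-A_d)/T$ cancels exactly. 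The net result is the threshold $b<\sqrt{\mu_1\mu_d}$, which is strictly larger than the paper's $2^{1-d/2}\sqrt{\mu_1\mu_d}$ for $d\ge 3$ and is independent of $d$ --- so your argument is not only valid but improves the statement. The choice to peel off the equation indexed by the largest $\mu_i$, which you flag as the delicate point, is indeed essential: peeling off $\mu_1$ would only yield $\sqrt{\mu_1\mu_2}$, and using a crude bound for $\sum_{j\ne d}s_j$ would reintroduce a $d$-dependent constant.
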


Recall that, generally speaking, when $b_{ij}$ are neither small nor large with respect to all $\mu_i$, then there are no positive solutions at all: see \cite[Theorem 1-(ii)]{BartschWang}, \cite[Theorem 1- (ii)]{Sirakov} and \cite[Proposition 1.1]{Soave}.
\bigskip

The rest of this paper is organized as follows: the proofs of the existence results --- Theorems \ref{Tadmissibilidade2}, \ref{Tadmissibilidade1} and \ref{existbetageral} ---  are contained in the next section. The proofs of the nonexistence results --- Theorems \ref{Tnaoexistencia1}, \ref{Tbetanaoadmissivel2} and \ref{Tnaoexistencia2} --- are included in Section \ref{sec:Nonexistence}. In Section \ref{sec:OpenProblems} we state a few open problems and we close the paper with an Appendix where we have shown a monotonicity  result concerning the ground state's energy levels with respect to the parameters.

\section{Existence of fully nontrivial ground states}\label{sec:Existence}
In this section we prove the existence results. For convenience, we will begin with Theorem \ref{Tadmissibilidade1} and treat afterwards Theorems \ref{Tadmissibilidade2} and \ref{existbetageral}.

\begin{proof}[Proof of Theorem \ref{Tadmissibilidade1}]
We will proceed by mathematical induction on the number of equations. As mentioned in the introduction, it is well-known that the result holds true for $d=2$ equations. Indeed, in this case, for all $\lambda_1,\lambda_2>0$, the system (\ref{sistema}) admits a fully nontrivial ground state for $b$ large enough.

\bigskip

We now consider an admissible $\lambda=(\lambda_1,\dots,\lambda_d)\in\mathcal{A}_{\alpha(d)}$, with
\[
\alpha(d)=
1+\frac 1{d-2}.
\]
and, given $I\subsetneq\{1,2,\ldots, d\}$, we denote by $c(I)$ the ground state level of the system

\[
-\Delta u_i+\lambda_i u_i= \mu_i u_i^{3}+bu_i\sum_{j\in I,j\neq i}u_j^2,\qquad i\in I.
\] 

Notice that if $I\subset\{1,\dots,d\}$, then $(\lambda_i\,:\,i\in I)\in \mathcal{A}_{\alpha(\#I)}$.
Hence, following the ideas in \cite{OliveiraTavares}, we assume, by induction hypothesis, that there exists a ground state level  $c(I)$ with $\#I=d-1$ and for all $J$ with $\#J<d-1$, $c(I)<c(J)$. Without loss of generality, we assume that
\[
c^{sem}:=c({\{1,\ldots, d-1\}})=\min \{ c(I):\ \#I=d-1\},
\]
where $c^{sem}$ is achieved by the fully nontrivial ground state $(u_1,\ldots, u_{d-1})\in \mathcal{N}_{d-1}$, solution of
\begin{equation}
 \label{d-1}
-\Delta u_i+\lambda_i u_i= \mu_i u_i^{3}+bu_i \mathop{\sum_{j=1}^{d-1}}_{j\neq i}u_j^2,\qquad i=1,\ldots, d-1.
\end{equation}
Noticing that $I_d(u_1,\ldots,u_{d-1},0)=I_{d-1}(u_1,\ldots, u_{d-1})$, we will prove our result by exhibiting $(U_1,\ldots ,U_d)\in\mathcal{N}_d$, $U_i\neq 0$, such that $I_d(U_1,\ldots, U_d)<I_d(u_1,\ldots, u_{d-1},0)=c^{sem}$, which guarantees that the energy level of $(U_1,\ldots, U_d)$ is inferior to the energy level of any solution of $(\ref{sistema})$ with trivial components.\\
For a fixed $w\in H^1(\er^N)$, $w\neq 0$, and $\theta>0$, we choose $t>0$ such that
\begin{equation}
 \label{condicaoNehari}
(U_1,\ldots ,U_d)=(tu_1,\ldots, tu_{d-1},t\theta w)\in\mathcal{N}_d.
\end{equation}
A straightforward computation leads to
\begin{equation}
 \label{valordet}
t^{2}=\frac{1+\theta^2C_1}{\displaystyle 1+\mu_d \theta^{4}C_2+2b\sum_{i=1}^{d-1}\theta^2 D_i},
\end{equation}
where
$$C_1=\frac{\|w\|_{\lambda_d}^2}{\displaystyle \sum_{i=1}^{d-1}\|u_i\|_{\lambda_i}^2}, \quad C_2=\frac{|w|_{4}^{4}}{\displaystyle \sum_{i=1}^{d-1} \|u_i\|_{\lambda_i}^2}\textrm{ and }D_i=\frac{|u_iw |_{2}^2}{\displaystyle \sum_{i=1}^{d-1} \|u_i\|_{\lambda_i}^2}.$$
Now, since $(tu_1,\ldots, tu_{d-1},t\theta w)\in\mathcal{N}_d$, 
\begin{align*}
I_d(tu_1,\ldots,tu_{d-1},t\theta w)&=\frac{1}{4}\Big(\sum_{i=1}^{d-1} \|t u_i\|_{\lambda_i}^2   +\theta^2\|t\theta w\|_{\lambda_d}^2\Big)\\
&=\frac{t^2}{4}\Big(1+C_1\theta^2\Big)\sum_{i=1}^{d-1} \|u_i\|_{\lambda_i}^2.
\end{align*}
and 
$$I_d(tu_1,\ldots,tu_{d-1},t\theta w)<\frac 14\sum_{i=1}^{d-1}\|u_i\|_{\lambda_i}^2=I_d(u_1,\dots,u_{d-1},0)$$
if and only if
\begin{equation}
 \label{condicaofinal}
\frac{(1+\theta^2C_1)^2-1-\mu_d \theta^{4}C_2}{\theta^2}<2b\sum_{i=1}^{d-1}D_i.
\end{equation}
By taking the limit $\theta\to 0^+$ we conclude that we only need to exhibit $w$ such that  $C_1<b\sum_{i=1}^{d-1}D_i$, that is
\begin{equation}
 \label{dois}
 \|w\|_{\lambda_d}^2<b\sum_{i=1}^{d-1}|u_iw|_2^2.
\end{equation}
This is straightforward if there exists $1\leq i_0\leq d-1$ such that $\lambda_d\leq \lambda_{i_0}$.\\
Indeed, in this case, by multiplying the equation
$$
-\Delta u_{i_0}+\lambda_{i_0}u_{i_0}=\mu_{i_0}u_{i_0}^3+bu_{i_0}\sum_{{\substack{i=1\\  i\neq i_{0}}}}^{d-1}u_{i}^2
$$
by $u_{i_0}$ and integrating, we obtain, for $b>\max\{\mu_i\,:\,1\leq i\leq d\},$
\begin{equation}
 \label{escolhafacil}
\|u_{i_0}\|_{\lambda_{d}}^2 \leq \|u_{i_0}\|_{\lambda_{i_0}}^2= \mu_{i_0}|u_{i_0}|_4^4+b\sum_{{\substack{i=1\\  i\neq i_{0}}}}^{d-1}|u_{i}u_{i_0}|_2^2<b\sum_{i=1}^{d-1}|u_{i}u_{i_0}|_2^2
\end{equation}
and we only need to choose $w=u_{i_0}$.
Hence, in the rest of the this proof, we may assume that $\lambda_d>\lambda_i$ for all $1\leq i\leq d-1$.\\
Without loss of generality, we may also assume that 
\begin{equation}\label{quatro}
|u_1|_4\geq |u_i|_4\textrm{ for all }1\leq j\leq d-1. 
\end{equation}
We then choose $w=u_1$ and, since
\begin{equation}
\label{tres} 
\|u_{1}\|_{\lambda_{1}}^2= \mu_{1}|u_{1}|_4^4+b\sum_{i=2}^{d-1}|u_{i}u_{1}|_2^2,
\end{equation}
the condition \eqref{dois} is equivalent to
\begin{equation}
 \lambda_d-\lambda_1<(b-\mu_1)\frac{|u_1|_4^4}{|u_1|_2^2}.
\end{equation}
By \eqref{tres},
$$\lambda_1<\mu_1\frac{|u_1|_4^4}{|u_1|_2^2}+b\sum_{i=2}^{d-1}\frac{|u_1u_i|_2^2}{|u_1|_2^2}<\mu_1\frac{|u_1|_4^4}{|u_1|_2^2}+\frac b2\sum_{i=2}^{d-1}\frac{|u_1|_4^4+|u_i|_4^4}{|u_1|_2^2}$$
\begin{equation}
 \label{seis}
<((d-2)b+\mu_1)\frac{|u_1|_4^4}{|u_1|_2^2},
\end{equation}
where we have used the hypothesis \eqref{quatro}.\\
Hence, for \eqref{dois} to hold, it is sufficient that $\displaystyle \lambda_d-\lambda_1<\frac{b-\mu_1}{(d-2)b+\mu_1}{\lambda_1}$.\\
By taking the limit $b\to +\infty$, this condition holds for large $b$ if
$\lambda_d<(1+\frac 1{d-2})\lambda_1$, which is true if $(\lambda_1,\dots,\lambda_d)\in\mathcal{A}_{1+\frac 1{d-2}}$. \qedhere
\end{proof}

Before we address Theorem \ref{Tadmissibilidade2}, we need the following classification result, which we think to be of independent interest. It corresponds to a substantial improvement of \cite[Theorem 1]{Correia3}, which holds in the weaker case $\lambda_i= b$ for \emph{all} $i=1,\ldots, d$.

\begin{Teorema}\label{Tlambdaigual}
Let $d\ge3$, $b_{ij}\equiv b>0$ and $0<\lambda_1,\dots, \lambda_d$. Suppose that, for some $k\in\{2,\dots, d\}$, $\lambda_1=\cdots=\lambda_k\equiv\lambda$. Consider the function $f:\er^k\mapsto \er$ defined by
$$
f(x_1,\cdots,x_k)=\mathop{\sum_{i,j=1}^k}_{i\neq j} bx_i^2x_j^2 + \sum_{i=1}^k \mu_i x_i^4,
$$
set $f_{max}=\displaystyle\max_{|X|=1}f(X)$ and define the set $\mathcal{X}$ in the following way:
\begin{enumerate}
\item if $\max\{\mu_1,\ldots, \mu_k\}>b$, setting $e_i$ to be the $i$-th vector of the canonical basis of $\er^k$,
$$
\mathcal{X}=\left\{ \pm e_i: \textrm{  $i$ is such that } \mu_i=\max_{j=1,\dots,k}\mu_j \right\};
$$
\item if $\max\{ \mu_1,\ldots, \mu_k\}<b$, then $f_{max}<b$ and
$$
\mathcal{X}=\left\{ X=(x_1,\dots,x_k)\in\er^k: x_i=\pm\left(\frac{b-f_{max}}{b-\mu_i}\right)^{1/2} \right\};
$$
\item if $\max \{ \mu_1,\ldots, \mu_k\}=b$,
$$
\mathcal{X}=\left\{ X=(x_1,\dots,x_k)\in\er^k:\ |X|=1 \textrm{ and } x_i=0\ \forall i \textrm{ such that }\mu_i<b\right\}.
$$
\end{enumerate}

Then, denoting by $G_{d-k+1}$ the set of ground states for the system
$$
\begin{cases}
\displaystyle -\Delta u + \lambda u= \mu u^3 + b u\sum_{j>k}u_j^2\\[10pt]
\displaystyle -\Delta u_i + \lambda_i u_i = \mu_iu_i^3 + bu^2u_i+ bu_i \mathop{\sum_{j>k}}_{j\neq i}u_j^2 ,\ i=k+1,\dots, d
\end{cases}
$$
with $\mu=f_{max}$, one has
$$
G=\{(Xu,u_{k+1},\dots, u_d)\in (H^1(\er^N))^d: (u,u_{k+1},\dots, u_d)\in G_{d-k+1}, X\in \mathcal{X}\}.
$$
\end{Teorema}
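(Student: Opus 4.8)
The plan is to \emph{collapse} the first $k$ components, all of which carry the same parameter $\lambda$, into a single profile. Write $\mathbf u'=(u_1,\dots,u_k)$ and $u:=|\mathbf u'|=(u_1^2+\cdots+u_k^2)^{1/2}$. The point is that the \emph{self}-interaction of $u_1,\dots,u_k$ enters $I_d$ only through $\int|\nabla\mathbf u'|^2$, through $\int|\mathbf u'|^2=\int u^2$, and through the pointwise quantity $f(\mathbf u'(x))$, while their \emph{mutual} interaction with $u_{k+1},\dots,u_d$ enters only through $|\mathbf u'|^2=u^2$. Two elementary pointwise inequalities then drive the whole argument: the diamagnetic inequality $|\nabla u|\le|\nabla\mathbf u'|$ a.e.\ in $\er^N$, and, since $f$ is positively $4$-homogeneous, $f(\mathbf u'(x))\le f_{max}\,|\mathbf u'(x)|^4=f_{max}\,u(x)^4$. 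Before anything else I would dispose of the finite-dimensional part: a short (but three-case) computation shows that the set $\mathcal X$, as defined in the statement, is exactly the set of maximizers of $f$ on the unit sphere of $\er^k$, that $f_{max}<b$ in Case 2 and $f_{max}=b$ in Case 3, and --- crucially for later --- that every $X=(x_1,\dots,x_k)\in\mathcal X$ satisfies $|X|=1$, $f(X)=f_{max}$, and the Lagrange identity
\[
b+(\mu_i-b)x_i^2=f_{max}\qquad\text{for every }i\text{ with }x_i\neq0 .
\]
Note also that $\mathcal X$ is invariant under sign changes of the coordinates and that $f_{max}\ge\mu_1>0$.

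The easy direction is the inclusion ``$\supseteq$'' together with the bound $c\le c_{\mathrm{red}}$, where $c_{\mathrm{red}}$ is the ground state level of the system defining $G_{d-k+1}$ (with $\mu=f_{max}$). Given $(u,u_{k+1},\dots,u_d)\in G_{d-k+1}$ and $X\in\mathcal X$, a direct substitution into \eqref{sistema} using the Lagrange identity shows that $\mathbf v:=(x_1u,\dots,x_ku,u_{k+1},\dots,u_d)$ is a non-zero critical point of $I_d$; since $\sum_ix_i^2=1$ and $f(X)=f_{max}$, the Nehari identity $I_d=\frac14\sum_i\|\cdot\|_{\lambda_i}^2$ gives $I_d(\mathbf v)=I_{d-k+1}(u,u_{k+1},\dots,u_d)=c_{\mathrm{red}}$, whence $c\le c_{\mathrm{red}}$ and, once we know $c=c_{\mathrm{red}}$, $\mathbf v\in G$. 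For the reverse bound $c\ge c_{\mathrm{red}}$, fix $\mathbf u\in G$, put $u=|\mathbf u'|$, $\tilde{\mathbf u}=(u,u_{k+1},\dots,u_d)$, and set $A=\|u\|_\lambda^2+\sum_{j>k}\|u_j\|_{\lambda_j}^2$, $\tilde A=\sum_{i\le k}\|u_i\|_{\lambda_i}^2+\sum_{j>k}\|u_j\|_{\lambda_j}^2$, and let $B,\tilde B$ be the corresponding quartic forms of $\tilde{\mathbf u}$ and $\mathbf u$. The two pointwise inequalities give $A\le\tilde A$ and $\tilde B\le B$, while $\tilde A=\tilde B$ since $\mathbf u\in\mathcal N_d$. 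Maximizing $s\mapsto I_{d-k+1}(s\tilde{\mathbf u})$ (whose maximizer lies on $\mathcal N_{d-k+1}$) yields
\[
c_{\mathrm{red}}\le\frac{A^2}{4B}\le\frac{\tilde A\,A}{4B}\le\frac{\tilde A}{4}=I_d(\mathbf u)=c .
\]
Combining with $c\le c_{\mathrm{red}}$, all inequalities are equalities, so $A=\tilde A$ and $B=\tilde B$; since the $L^2$- and cross-terms cancel exactly, this is equivalent to $\int|\nabla u|^2=\int|\nabla\mathbf u'|^2$ and $\int f(\mathbf u')=f_{max}\int u^4$, which with the two pointwise inequalities force $|\nabla u|=|\nabla\mathbf u'|$ a.e.\ and $f(\mathbf u'(x))=f_{max}\,u(x)^4$ a.e.

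It remains to turn these two rigidity identities into the stated structure. After flipping the signs of $u_1,\dots,u_k$ (this keeps $\mathbf u$ in $G$ and does not change $u=|\mathbf u'|$) we may assume $u_1,\dots,u_k\ge0$; by elliptic regularity and the strong maximum principle each such $u_i$ is then $\equiv0$ or strictly positive on $\er^N$. Let $S=\{i\le k:u_i\not\equiv0\}$. If $S=\emptyset$ then $\mathbf u=(0,\dots,0,u_{k+1},\dots,u_d)$, $u\equiv0$, and the energy identity together with $c=c_{\mathrm{red}}$ gives $(0,u_{k+1},\dots,u_d)\in G_{d-k+1}$, so the statement holds trivially. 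If $S\neq\emptyset$ then $u>0$ on $\er^N$, and $f(\mathbf u'(x))=f_{max}u(x)^4$ says precisely that $\mathbf u'(x)/u(x)\in\mathcal X$ for every $x$; for $i\in S$ its $i$-th coordinate never vanishes, so the Lagrange identity gives $b+(\mu_i-b)(u_i/u)^2=f_{max}$ pointwise, and the $i$-th equation of \eqref{sistema} collapses to
\[
-\Delta u_i+\lambda u_i=\Big(f_{max}u^2+b\sum_{j>k}u_j^2\Big)u_i ,
\]
a linear Schr\"odinger equation whose potential is independent of $i\in S$. Since two positive $H^1(\er^N)$-solutions of such an equation are proportional (the bottom of the spectrum is a simple eigenvalue), all $u_i$, $i\in S$, are pairwise proportional; normalizing, $u_i=x_iu$ for $i\le k$ with $X=(x_i)_{i\le k}\in\mathcal X$ (with $x_i=0$ for $i\notin S$), i.e.\ $\mathbf u'=Xu$. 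Finally, using $|\nabla u|=|\nabla\mathbf u'|$ one computes $\Delta(u^2)$ from the equations for the $u_i$, $i\in S$, and deduces $-\Delta u+\lambda u=f_{max}u^3+bu\sum_{j>k}u_j^2$; combined with the equations for $u_{k+1},\dots,u_d$ (which hold automatically, since $\sum_{i\le k}u_i^2=u^2$) and with $I_{d-k+1}(u,u_{k+1},\dots,u_d)=I_d(\mathbf u)=c=c_{\mathrm{red}}$, this gives $(u,u_{k+1},\dots,u_d)\in G_{d-k+1}$ and completes the inclusion ``$\subseteq$''.

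The main obstacle is exactly this last rigidity step: passing from the two scalar equalities to the \emph{global} alignment $\mathbf u'=Xu$ with a \emph{fixed} $X\in\mathcal X$. When $\mathcal X$ is finite (Case 1, and Case 3 with a single maximal $\mu_i$) a continuity/connectedness argument on $\er^N$ would suffice, but in Case 2 and in Case 3 with several $\mu_i=b$ the set $\mathcal X$ is a continuum, so one genuinely needs the reduction, via the Lagrange identity, to a \emph{common} linear Schr\"odinger equation for the surviving components together with uniqueness of its positive $H^1$-solution; this is what makes the three cases yield to a single argument. The remaining points are routine but must be handled with care: the equality case of the diamagnetic inequality (including the upgrade from the integrated identity to the pointwise one), the elliptic regularity needed to run the strong maximum principle on the components of a ground state, and the verification that the finite-dimensional set $\mathcal X$ described in the three cases really is the maximizer set with the stated Lagrange structure.
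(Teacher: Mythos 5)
Your proof is correct, and it shares the paper's core mechanism: the two pointwise inequalities $|\nabla u|\le|\nabla\mathbf u'|$ and $f(\mathbf u')\le f_{max}\,u^4$, the resulting comparison of Nehari levels forcing both to be equalities, and the interpretation of those equalities as $\mathbf u'(x)/u(x)\in\mathcal X$ a.e.\ together with $|\nabla u|=|\nabla\mathbf u'|$ a.e. Where you genuinely diverge is in the last rigidity step. The paper fixes some $X_0\in\mathcal X$, shows that $(X_0u,u_{k+1},\dots,u_d)$ is itself a ground state (so that $(u,u_{k+1},\dots,u_d)$ solves the reduced system, via the Lagrange identity for $X_0$), and then proves constancy of the profile ratio $X(x)=\mathbf u'(x)/u(x)$ by inserting $\mathbf u=(Xu,u_{k+1},\dots,u_d)$ into the system, obtaining $u\Delta X_i+2\nabla X_i\cdot\nabla u=0$, and testing against $uX_i$ to get $\int u^2|\nabla X_i|^2=0$. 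You instead observe that on the set $S$ of surviving indices the pointwise Lagrange identity $b+(\mu_i-b)(u_i/u)^2=f_{max}$ collapses each $i$-th equation to the \emph{same} linear equation $-\Delta u_i+(\lambda-V)u_i=0$ with $V=f_{max}u^2+b\sum_{j>k}u_j^2$, and invoke simplicity of the principal eigenvalue of $-\Delta+(\lambda-V)$ to conclude pairwise proportionality, recovering the equation for $u$ afterwards. The two are equivalent in content --- the paper's integral identity is essentially the Picone/Wronskian computation underlying spectral simplicity --- but yours makes it transparent why the three cases of $\mathcal X$ reduce to a single argument. If you write this up, make explicit the regularity and decay of ground states (so that $V$ is bounded, $V\to 0$ at infinity, hence $0$ is an isolated eigenvalue below the essential spectrum $[\lambda,\infty)$, and so that the strong maximum principle applies component-wise); the paper leaves the same points implicit. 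Also verify once that $c=c_{\mathrm{red}}$ is still obtained when $u\equiv0$, which you do correctly via $A=\tilde A$, $B=\tilde B$ in that degenerate case.
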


Observe that, in the previous statement, the order of the $\lambda_i$ is not important, since one may switch equations in the system. Hence Theorem \ref{Tlambdaigual} states that, if $k$ components have the same $\lambda$, then any ground state has those components proportional to each other.

\begin{proof}[Proof of Theorem \ref{Tlambdaigual}] We proceed in two steps.

\noindent\textit{Step 1. Characterization of ground states.}
Take $\mathbf{u}=(u_1,\cdots,u_d)\in G$, and let us show that $\mathbf{u}=(Xu,u_{k+1},\dots, u_d)$, where $(u,u_{k+1},\dots, u_d)\in G_{d-k+1}$ and $X\in \mathcal{X}$. Define
\[
u(x)=\left(\sum_{i=1}^k u_i^2(x)\right)^{1/2}.
\]
If $u=0$, there is nothing left to prove. Otherwise, let $\mathcal{X}$ be the (nonempty) set of solutions to the maximization problem
\begin{equation}\label{maximizacao}
f(X_0)=f_{max}=\max_{|X|=1} f(X), \quad |X_0|=1.
\end{equation}

Take $X_0\in\mathcal{X}$ and $W=(w_1,\dots,w_d)=(X_0u,u_{k+1},\dots,u_d)$. Let us show that $W$ is also a ground state solution. We have
\begin{align*}
&\sum_{i=1}^d \mu_i |u_i|_{4}^{4}+\sum_{j\neq i} b|u_iu_j |_2^2=  \int f(|u_1|,\cdots,|u_{d-1}|) +\sum_{i>k}\int \mu_i u_i^4 \\
 & + \sum_{i,j>k,i\neq j} \int bu_i^2u_j^2 +\sum_{i>k}\sum_{j=1}^{k} \int 2bu_i^2u_j^2  \\
= &\sum_{i>k}\int \mu_i u_i^4 + b\sum_{i,j>k,i\neq j} \int u_i^2u_j^2 +2b\sum_{i>k}\int u_i^2u^2 +\int f\left(\frac{|u_1|}{u}, \cdots, \frac{|u_{d-1}|}{u}\right)u^4 \\ 
\le &  \sum_{i>k}\int \mu_i u_i^4 + b\sum_{i,j>k, i\neq j} \int u_i^2u_j^2 +2b\sum_{i>k} \int u_i^2|X_0|^2u^2  + \int  f(X_0)u^4 \\ =\ &\sum_{i=1}^d \mu_i |w_i|_{4}^{4}+\sum_{j\neq i} b|w_iw_j |_2^2
\end{align*}
 Furthermore, since $|\nabla \mathbf{u}|^2_2\leq \sum_{i=1}^k |\nabla u_i|_2^2$,
\[
 \sum_{i=1}^d \|w_i\|_{\lambda_i}^2=\sum_{i=k+1}\|u_i\|_{\lambda_i}^2 + \sum_{i=1}^{k} \|(X_0)_iu\|^2_{\lambda} =\sum_{i=k+1}^d\|u_i\|_{\lambda_i}^2 + \|u\|^2_{\lambda} \le\sum_{i=1}^k\|u_i\|_{\lambda_i}^2.
 \]
Take $t>0$ such that $tW\in\mathcal{N}_d$. Then the above inequalities show that
\[
t^2=\frac{\sum_{i=1}^k \|w_i\|_{\lambda_i}^2}{\sum_{i=1}^d \mu_i |w_i|_4^4+\sum_{j\neq i} b|w_i w_j|^2_2}\leq \frac{\sum_{i=1}^k \|u_i\|_{\lambda_i}^2}{\sum_{i=1}^d \mu_i |u_i|_4^4+\sum_{j\neq i} b|u_i u_j|^2_2}=1
\] 
and therefore
\[
I_d(tW)=\frac{t^2}{4}\sum_{i=1}^d \|w_i\|_{\lambda_i}^2 \leq \frac{t^2}{4}\sum_{i=1}^d \|u_i\|_{\lambda_i}^2\leq I_d(\mathbf{u})=c.
\] 
This implies that $tW$ is a ground state. Since all the above inequalities are, in fact, equalities, $t=1$ and actually $W$ is a ground state. Moreover,
\[
f\left(\frac{|u_1|}{u},\ldots, \frac{|u_k|}{u}\right)=f(X_0) \qquad \text{ for a.e. } x\in \R^N,
\]
so that, if we write $\mathbf{u}(x)=( X(x)u(x), u_k(x),\dots, u_d(x)),$ with $X_i=|u_i|/u$, then $X\in \mathcal{X}$ for a.e. $x\in \R^N$. Observe that $X\in C^\infty$ as $\mathbf{u}$ and $u$ are both smooth and $u\neq 0$.

Let us now check that $(u,u_{k+1},\ldots, u_d)\in G_{d-k+1}$. Since $X_0$ is a solution to the maximization problem \eqref{maximizacao}, there exists a Lagrange multiplier $\mu\in \er$ such that
\begin{equation}\label{Xlagrange}
\mu (X_0)_i = \mu_i (X_0)_i^3 + b\sum_{j=1, j\neq i}^{k} (X_0)_j^2(X_0)_i,\ i=1,\dots,k.
\end{equation}
Multiplying by $(X_0)_i$ and summing in $i$, we obtain $\mu=f_{max}$. Hence, since $W$ is a solution of \eqref{sistema}, the pair $(u,u_{k+1},\dots, u_d)$ must be a solution of
\begin{equation}\label{sistemareduzido}
\left\{\begin{array}{l}
\displaystyle -\Delta u + \lambda u= \mu u^3 + b u \sum_{j>k}u_j^2\\[10pt]
 -\Delta u_i + \lambda_i u_i = \mu_iu_i^3 +  bu^2u_i+ b u_i \mathop{\sum_{j>k}}_{j\neq i}bu_j^2,\ i=k+1,\dots,d.
\end{array}\right.
\end{equation}
Moreover, the minimality of $W$ implies that $(u,u_{k+1},\dots,u_d)\in G_{d-k+1}$.

Finally, let us see that actually $X$ is constant. We know that the vector $\mathbf{u}=(Xu,u_{k+1},\dots,u_d)$ is also a solution of \eqref{sistema}. Inserting this expression onto system \eqref{sistema} and using the above equations, we see that
\begin{equation}
u \Delta X_i + 2 \nabla X_i\cdot \nabla u =0, \ i=1,\dots, k.
\end{equation}
It is now a simple exercise to check that this implies that
$$
\int |\nabla X_i|^2|u|^2=0,\quad i=1,\dots,k.
$$
Since $u>0$, we conclude that $X$ is constant. Therefore
\begin{equation}
G\subseteq \{(Xu,u_{k+1},\dots, u_d)\in (H^1(\er^N))^d: (u,u_{k+1},\dots, u_d)\in G_{d-k+1}, X\in \mathcal{X}\}.
\end{equation}
The other inclusion comes from the fact that, given $X_0\in \mathcal{X}$, $\mu=f_{max}$ and $(u,u_{k+1},\ldots, u_{d})\in G_{d-k+1}$, then $(X_0u,u_{k+1},\ldots, u_d)$ is a ground state solution of \eqref{sistema}.
\medbreak

\noindent\textit{Step 2. Expression of $\mathcal{X}$.} Take $X=(x_1,\dots,x_{k})$ such that $|X|=1$. Then
$$
f(X)=\sum_{i=1}^{k} \mu_i x_i^4 + \sum_{i=1}^k bx_i^2(1-x_i^2)= b + \sum_{i=1}^{k}x_i^4(\mu_i-b).
$$
Define $g(z_1,\cdots, z_{k})=b + \sum_{i=1}^{k}z_i^2(\mu_i-b)$. Then $X$ is a maximizer of $f$ on the unit ball iff $Z=(x_1^2,\cdots, x_{k}^2)$ is a maximizer of $g$ on the convex set $\Delta_{k}=\{Z\in\er^{k}: z_i\geq 0,\ \sum_i z_i=1\}$. Now one must split in several cases:
\begin{itemize}
\item If $\max_i \mu_i>b$, writing $\Delta_{k}^+=\{Z\in\er^{k}: \sum_i z_i=1; z_i=0,\forall i:\mu_i\le b\}$,
$$
\max_{Z\in \Delta_{k}} g(Z)<  \max_{Z\in \Delta_{k}^+}g(Z)
$$
Since $g$ is a strictly convex function on $\Delta_{k}^+$, its maximum is attained at some vertex, hence the maximizers are $e_i$, for $i$'s such that $\mu_i=\max\{\mu_1,\ldots, \mu_k\}$.
\item If $\max_i \mu_i<b$, then $g$ is a strictly concave function and hence the maximum is attained at a unique point $Y=(y_1,\cdots,y_{k})$ on the interior of $\Delta_{k}$. This implies that, for some Lagrange multiplier $\eta\in \er$,
$$
y_i(\mu_i-b)=\eta \qquad \textrm{ for every $i=1,\ldots, k$.}
$$
Multiplying the $i$-th equation by $y_i$ and summing up, we obtain $\eta=f_{max}-b$ and therefore
$$
y_i=\frac{f_{max}-b}{\mu_i-b};
$$
\item Finally, if $\max_i \mu_i=b$, writing $\Delta_{k}^0=\{Z\in\er^{k}: \sum_i z_i=1; z_i=0,\forall i:\mu_i< b\}$,
$$
\max_{Z\in \Delta_{k}} g(Z)\le  \max_{Z\in \Delta_{k}^0}g(Z).
$$
Since $g$ is constant on $\Delta_{d-1}^0$, we obtain the desired expression.\qedhere
\end{itemize}
\end{proof}


Now we already have the required tools to prove Existence Result I.

\begin{proof}[Proof of Theorem \ref{Tadmissibilidade2}] Along this proof, we denote by $c(\lambda_1,\lambda_2,\dots,\lambda_d)$ the ground state action level of system \eqref{sistema}, and  by $c^{sem}(\lambda_1,\lambda_2,\dots,\lambda_d)$ the semitrivial ground state level, that is
\[
c^{sem}(\lambda_1,\lambda_2,\dots,\lambda_d)=\min\{c(I):\ I\subset\{1,\ldots, d\}, \ \#I\leq d-1\}.
\]
Our aim is to prove that
\begin{equation}\label{eq:finalgoal}
c(\lambda_1,\lambda_2,\dots,\lambda_d)<c^{sem}(\lambda_1,\lambda_2,\dots,\lambda_d).
\end{equation}
Since the proof is long, we divide it into several steps.

\medbreak

\noindent \emph{Step 1.}  It is enough to prove \eqref{eq:finalgoal} in the case $\mu_i=0$ for all $i$ and $b=1$. In fact, by considering the scaling $U_i=\sqrt{b}u_i$ and by the continuity of the levels $c$ and $c^{sem}$, one may consider the case $b$ large as a perturbation of this case  (a complete justification of this procedure is made in Proposition 11 of \cite{Correia3}). 
\medbreak

\noindent \emph{Step 2.} Also, using the scaling $U_i(x)=\lambda_1^{-1/2}u_i(\lambda_1^{-1/2} x)$, the vector $(\lambda_1, \lambda_2, \ldots,\lambda_d)$ becomes $(1,\lambda_2/\lambda_1,\ldots,\lambda_d/\lambda_1)$. Hence, one may focus on the case $\lambda_1=1$, to simplify the notations.

\medbreak

Let us now start with the core of the proof. From Lemma \ref{lemma:monotonicity_of_gsl}, one has the following properties:
\begin{equation}\label{eq:relacao1}
c(1,\lambda_2,\dots,\lambda_d)\le c(1,\lambda_d,\dots,\lambda_d)
\end{equation}
\begin{equation}\label{eq:relacao2}
c^{sem}(1,\lambda_2,\dots,\lambda_d)\ge c^{sem}(1,\lambda_2,\dots,\lambda_2)
\end{equation}
and, using a suitable scaling,
\begin{align}\label{eq:relacao3}
c(1,\lambda_d,\dots,\lambda_d)&=\left(\frac{\lambda_d}{\lambda_2}\right)^{\frac{4-N}{2}} c\left(\frac{\lambda_2}{\lambda_d},\lambda_2,\dots,\lambda_2\right)\\&\le \left(\frac{\lambda_d}{\lambda_2}\right)^{\frac{4-N}{2}}c(1,\lambda_2,\dots,\lambda_2).
\end{align}
We will now focus our attention on $c(1,\lambda_2,\dots,\lambda_2)$, proving that
\begin{equation}\label{eq:nextgoal}
c(1,\lambda_2,\dots,\lambda_2)<K c^{sem}(1,\lambda_2,\ldots, \lambda_2),
\end{equation}
for some constant $K>0$.
\medbreak

\noindent \emph{Step 3.} Now take a semitrivial $\mathbf{u}=(u_1,\dots, u_d)$ that achieves $c^{sem}(1,\lambda_2,\dots,\lambda_2)$. Since $\lambda_1=\min \{\lambda_1,\ldots, \lambda_d\}$, using an argument similar to that in the proof of theorem \ref{Tadmissibilidade1}, one has $u_1\neq 0$. Since one may order the remaining components as one wishes, we shall suppose that $u_d=0$. Next, we want to apply Theorem \ref{Tlambdaigual}. To that end, following the notations of that theorem, we determine explicitly $f_{max}$ and $\mathcal{X}$. First of all, since $\mu_i=0$, one has
$$
\mathcal{X}=\left\{X=(x_2,\dots,x_{d-1})\in \er^{d-2}: x_i=\pm \left(1-f_{max}\right)^{1/2}\right\}.
$$
On the other hand, since $\mathcal{X}$ is the set of solutions of the maximization problem
\begin{equation}
f(X_0)=f_{max}=\max_{|X|=1} f(X), \quad |X_0|=1,
\end{equation}
any element in $\mathcal{X}$ has unit norm. Since $$\left(\left(1-f_{max}\right)^{1/2},\dots, \left(1-f_{max}\right)^{1/2}\right)\in\mathcal{X},$$
we obtain $\left(1-f_{max}\right)^{1/2}=(d-2)^{-1/2}$, that is, $f_{max}=1-1/(d-2)$. Theorem \ref{Tlambdaigual} now implies that
$$
\mathbf{u}=(u_1,\pm (d-2)^{-1/2}u,\dots, \pm (d-2)^{-1/2}u,0)
$$
where $(u_1,u)$ is a (nontrivial) ground-state of
\begin{equation}
\label{sistwred}
\left\{\begin{array}{l}
-\Delta w_1 + w_1 =  w_2^2w_1\\
-\Delta w_2 + \lambda_2  w_2= \mu w_2^3 + w_1^2w_2
\end{array}\right., \quad \mu=1-\frac{1}{d-2}.
\end{equation}
\medbreak

\noindent \emph{Step 4.} We will provide an estimate of the $L^4$ norm of $u_1$ in terms of the $L^4$ norm of $u$. To this end, consider 
$$\mathbf{w}=(w_1,w_2)=\Big(u_1,\frac 1{\sqrt{\lambda_2}}u_1\Big).$$
Since $\lambda_2\geq 1$, a simple computation yields
\begin{equation}
 \label{filipe1}
\|w_1\|_1^2+\|w_2\|_{\lambda_2}^2=\|u_1\|_1^2+\frac 1{\lambda_2}\|u_1\|_{\lambda_2}^2\leq 2\|u_1\|_1^2.
\end{equation}
Furthermore, since $(u_1,u)$ is in particular a solution of \eqref{sistwred}, we obtain that
$$\|u_1\|_1^2=\int u_1^2u^2=\|u\|_{\lambda_2}^2-\mu \int u^4\leq \|u\|_{\lambda_2}^2.$$
Combining this with \eqref{filipe1}, we get $\|w_1\|_1^2+\|w_2\|_{\lambda_2}^2\leq \|u_1\|_1^2+\|u\|_{\lambda_2}^2$. 
Now
it is easy to see that 
\begin{equation}
 \label{filipe2}
\mu\int w_2^4+2\int w_1^2w_2^2=\frac {\mu+2\lambda_2}{\lambda_2^2}\int u_1^4\int u_1^4\leq \mu\int u^4+2\int u_1^2u^2.
\end{equation}
Indeed, if the converse inequality was true, by choosing $t$ such that $t{\bf w}=(tw_1,tw_2)$ lies in the Nehari manifold associated to \eqref{sistwred}, one would get that $t<1$, and the energy level associated to $t\bf{w}$ is strictly less than the one associated to $(u_1,u)$, which is absurd since the latter vector is a ground-state.

Now, from \eqref{filipe2}, for any $\varepsilon>0$,
$$\frac {\mu+2\lambda_2}{\lambda_2^2}\int u_1^4\leq \mu\int u^4+\varepsilon\int u_1^4+\frac 1{\varepsilon}\int u^4.$$
Choosing $\displaystyle \varepsilon=\frac {\mu+2\lambda_2}{2\lambda_2^2}$ one obtains the estimate
\begin{equation}
 \label{filipe3}
 \int u_1^4\leq 2\lambda_2^2\frac{(\mu+\lambda_2)^2+\lambda_2^2}{(\mu+2\lambda_2)^2}\int u^4,
\end{equation}
as wanted.
\medbreak

\noindent \emph{Step 5.} We are now ready to prove \eqref{eq:nextgoal}. Consider 
\[
\mathbf{v}=(v_1,\dots,v_d)=(u_1,(d-1)^{-1/2}u,\dots,(d-1)^{-1/2}u).
\] 
Then 
$$
\|u_1\|_1^2 + \sum_{i=2}^{d} \|u_i\|_{\lambda_2}^2 = \|v_1\|_1^2+ \sum_{i=2}^{d} \|v_i\|_{\lambda_2}^2
$$ 
and
\begin{align*}
&\sum_{{\substack{i,j=1\\  j\neq i}}}^{d}|v_iv_j|_2^2-\sum_{{\substack{i,j=1\\  j\neq i}}}^{d}|u_iu_j|_2^2= 
\left(2\sum_{\substack{j=2}}^{d}|v_1v_j|_2^2 - 2\sum_{\substack{j=2}}^{d}|u_1u_j|_2^2\right) \\&+ \left(\sum_{\substack{i,j=2\\  j\neq i}}^{d}|v_iv_j|_2^2-\sum_{{\substack{i,j=2\\  j\neq i}}}^{d}|u_iu_j|_2^2\right) 
=\left(\frac{d-2}{d-1}-\frac{d-3}{d-2}\right)|u|_4^4>0.
\end{align*}
If one defines
$$
t^2=\frac{\displaystyle\sum_{i=1}^{d} \|v_i\|_{\lambda_i}^2}{\displaystyle \sum_{{\substack{i,j=1\\  j\neq i}}}^{d}|v_iv_j|_2^2}
$$
then $t\mathbf{v}\in\mathcal{N}(1, \lambda_2, \dots, \lambda_2)$ (the Nehari manifold associated with the ground-state level $c(1,\lambda_2,\dots, \lambda_2)$) and
\begin{align*}
t^2&=\frac{\displaystyle\sum_{i=1}^{d} \|v_i\|_{\lambda_i}^2}{\displaystyle \sum_{{\substack{i,j=1\\  j\neq i}}}^{d}|v_iv_j|_2^2} = \frac{\displaystyle \sum_{i=1}^{d} \|u_i\|_{\lambda_i}^2}{\displaystyle \left(\frac{d-2}{d-1}-\frac{d-3}{d-2}\right)|u|_4^4 + \sum_{{\substack{i,j=1\\  j\neq i}}}^{d}|u_iu_j|_2^2}
\end{align*}
\begin{align*}
&=1-\frac{\displaystyle\left(\frac{d-2}{d-1}-\frac{d-3}{d-2}\right)|u|_4^4}{\displaystyle \left(\frac{d-2}{d-1}-\frac{d-3}{d-2}\right)|u|_4^4 + \sum_{{\substack{i,j=1\\  j\neq i}}}^{d}|u_iu_j|_2^2}\\
&=1-\frac{\displaystyle\frac{d-2}{d-1}-\frac{d-3}{d-2}}{\frac{|u_1u|_2^2}{|u|_4^4} + \frac{d-2}{d-1}}\le 1- \frac{\displaystyle\frac{d-2}{d-1}-\frac{d-3}{d-2}}{\frac{|u_1|_4^2}{|u|_4^2} + \frac{d-2}{d-1}}\\
&\le 1- \frac{\displaystyle\frac{d-2}{d-1}-\frac{d-3}{d-2}}{\displaystyle\sqrt{2\lambda_2^2\frac{(\mu+\lambda_2)^2+\lambda_2^2}{(\mu+2\lambda_2)^2}} + \frac{d-2}{d-1}} =: C(\lambda_2,d)^2
\end{align*}
Hence
\begin{equation}\label{eq:finalrelation}
c(1,\lambda_2,\dots,\lambda_2)\le I_d(t\mathbf{v})=t^2I_d(\mathbf{u})=t^2c^{sem}(1,\lambda_2,\dots,\lambda_2).
\end{equation}
\medbreak

\noindent \emph{Step 6.} Putting together all the relations between the various action levels, namely \eqref{eq:relacao1}, \eqref{eq:relacao2}, \eqref{eq:relacao3} and  \eqref{eq:finalrelation}, one arrives to
\begin{align*}
c(1,\lambda_2,\dots,\lambda_d)&\le t^2\left(\frac{\lambda_d}{\lambda_2}\right)^{\frac{4-N}{2}} c^{sem}(1,\lambda_2,\dots,\lambda_d)\\&\le C^2\left(\frac{\lambda_d}{\lambda_2}\right)^{\frac{4-N}{2}} c^{sem}(1,\lambda_2,\dots,\lambda_d)<c^{sem}(1,\lambda_2,\dots,\lambda_d),
\end{align*}
provided that $\lambda_d<\alpha\lambda_2$, with $\alpha=C^{-\frac{4}{4-N}}$. \qedhere
\end{proof}

Finally, we close this section with the proof of the existence result for the situation where the interaction coefficients $b_{ij}$ do not necessarily coincide.

\begin{proof}[Proof of Theorem \ref{existbetageral}]
Once again, we will proceed by mathematical induction on the number of equations. Following the steps of the proof of Theorem \ref{Tadmissibilidade1}, in order to prove this result we only need to exhibit $w\in H^1(\er^N)$ such that
\begin{equation}
\label{sete}
 \|w\|_{\lambda}^2<\sum_{i=1}^{d-1}b_{id}\int u_i^2w^2
\end{equation}
which is the analogous of condition \eqref{dois} for the system at hand. Here, the vector $(u_1,\cdots u_{d-1})$, assumed by induction hypothesis to be fully nontrivial, is a ground state achieving the level $c^{sem}=c(\{1,2,\dots,d-1\})$ with $c^{sem}\leq c(I)$ for any $\# I\leq d-1$.\\  
Without loss of generality, we may assume once again that for all $1\leq i\leq d-1$, $|u_1|_4\geq |u_i|_4$. Multiplying 
$$ -\Delta u_1+\lambda u_1=\mu_1|u_1|^2u_1+\sum_{j=2}^{d-1}b_{1j}u_j^2u_1$$
by $u_1$ and integrating by parts leads to
$$
\|u_1\|_{\lambda}^2=\mu_1 |u_1|_4^4+\sum_{j=2}^{d-1}b_{1j}\int u_j^2u_1^2.
$$
Hence, taking $w=u_1$, condition \eqref{sete} is equivalent to
$$\mu_1 |u_1|_4^4+\sum_{j=2}^{d-1}b_{1j}\int u_j^2u_1^2<\sum_{i=1}^{d-1}b_{id}\int u_i^2u_1^2,$$
that is
\begin{equation}
 \label{cw2}
\sum_{j=2}^{d-1}(b_{jd}-b_{1j})\int u_j^2u_1^2+(b_{1d}-\mu_1)\int u_1^4>0.
\end{equation}
 Therefore,
$$\int u_j^2u_1^2<\frac 12\Big(\int u_j^4+\int u_1^4\Big)<\int u_1^4$$
and 
$$\int u_1^4>\frac 1{d-2}\int \sum_{j=2}^{d-1}u_j^2u_1^2.$$
Finally, for \eqref{sete} to hold, it is sufficient that
\begin{equation}
\sum_{j=2}^{d-1}\Big(b_{jd}-b_{1j}+\frac{b_{1d}-\mu_1}{d-2}\Big)\int u_j^2u_1^2>0.
\end{equation}
and, in particular, the conditions stated in Theorem \ref{existbetageral}.\qedhere
\end{proof}

\begin{section}{Nonexistence of nontrivial ground states}\label{sec:Nonexistence}

In this section we prove the main nonexistence results, namely Theorems \ref{Tnaoexistencia1}, \ref{Tbetanaoadmissivel2} and \ref{Tnaoexistencia2} of the Introduction.

\begin{proof}[Proof of Theorem \ref{Tnaoexistencia1}]
Along this proof, we will denote by $\mathbf{u}_{b}=(u_{1,b},\ldots, u_{d,b})$ a solution of the system \eqref{sistema}, highlighting the dependence on $b$.  First of all, without loss of generality, we can assume that $\lambda_2=1$ (otherwise, exactly as in Step 2 of the proof of Theorem \ref{Tadmissibilidade2}, after a scaling we can pass from $(\lambda_1,\lambda_2,\ldots, \lambda_d)$ to $(\lambda_1/\lambda_2,1,\lambda_3/\lambda_2,\ldots, \lambda_d/\lambda_2))$.

We divide  the proof in three steps.
\medbreak

\noindent \emph{Step 1.} Define $U_{i,b}(x):=\sqrt{b}\, u_{i,b}(x)$, which solves
\begin{equation}\label{eq:normalized_system}
-\Delta U_{i,b}+\lambda_i U_{i,b}=\frac{\mu_i}{b}U_{i,b}^3+U_{i,b} \sum_{j\neq i} U_{j,b}^2.
\end{equation}
We claim that there exists $C=C(\lambda_1)$ such that $\|U_{i,b}\|_{\lambda_i}^2\leq C$.
Associated to system \eqref{eq:normalized_system}, we denote the corresponding action function by $I_{\lambda,\frac{\mu}{b}}$, the Nehari manifold by $\mathcal{N}_{\lambda,\frac{\mu}{b}}$, and the ground state level by $c({\lambda,\frac{\mu}{b}})$. Observe that
\[
\frac{1}{4}\sum_{i=1}^d \|U_{i,b}\|_{\lambda_i}^2 =c\left({\lambda,\frac{\mu}{b}}\right) \leq c({\lambda,0})
\]
by Lemma \ref{lemma:monotonicity_of_gsl}. On the other hand,
\begin{align*}
c({\lambda,0})&= \inf\{ I_{\lambda,0}(\mathbf{u}):\ \mathbf{u}\neq 0,\ \mathbf{u}\in \mathcal{N}_{\lambda,0}\}\\
			&\leq \inf\{ I_{\lambda,0}(u_1,u_2,0,\ldots,0):\ (u_1,u_2)\neq (0,0),\ (u_1,u_2,0,\ldots, 0)\in \mathcal{N}_{\lambda,0}\}.
\end{align*}
Then $\|U_{i,b}\|_{\lambda_i}^2\leq 4c$, where $c$ is the ground state level of the two equations system
\[
\begin{cases}
-\Delta w_1+\lambda_1 w_1=w_1w_2^2\\
-\Delta w_2+ w_2=w_2w_1^2.
\end{cases}
\]
Thus the claim made above follows.

\medbreak

\noindent \emph{Step 2.} Next, we prove that there exists $C=C(\lambda_1)$ such that
\begin{equation}\label{eq:Brezis-Kato}
|\sqrt{b}\, u_{i,b}|_\infty =|U_{i,b}|_\infty\leq C,\qquad \forall b>\max\{\mu_1,\ldots, \mu_d\}.
\end{equation}
Observe that, since $N\leq 3$, we have the continuous embedding $H^1(\R^N)\hookrightarrow L^6(\R^N)$. 

Let us perform a standard Brezis-Kato type argument to pass from $H^1$ to $L^\infty$ bounds. First of all, if there is a sequence $p_k\to \infty$ such that $|U_{i,b}|_{p_k}\leq 1$, the conclusion is obvious for $C:=1$. Suppose that $U_{i,b}\in L^{2+2\delta}(\R^N)$ for some $\delta>0$. We test the equation for $U_{i,b}$ in \eqref{eq:normalized_system} with $U_{i,b}^{1+\delta}$, obtaining
\begin{multline*}
\frac{1+\delta}{(1+\delta/2)^2} \int \left( |\nabla U_{i,b}^{1+\delta/2}|^2+\lambda_i |U_{i,b}|^{2+\delta}\right)=\frac{\mu_i}{b}\int |U_{i,b}|^{4+\delta}\\+\int |U_{i,b}|^{2+\delta} \sum_{j\neq i} U_{j,b}^2.
\end{multline*}
Since $\lambda_1\leq \lambda_i$ and $\mu_i/b\leq 1$, we deduce that
\begin{multline*}
\min\left\{\frac{1+\delta}{(1+\delta/2)^2},\lambda_1\right\} \int \left( |\nabla |U_{i,b}|^{1+\delta/2}|^2+\lambda_i |U_i,b|^{2+\delta}\right) \\
\leq \int |U_{i,b}|^{1+\delta}(U_{i,b}^3+U_{i,b}\sum_{j\neq i}U_{j,b}^2) \leq |U_{i,b}|_{2+2\delta}^{1+\delta}|h_b|_2
\end{multline*}
where $h_b:=U_{i,b}^3+U_{i,b}\sum_{j\neq i}U_{j,b}^2$. Thus
\[
C_6^2 \min\left\{ \frac{1+\delta}{(1+\delta/2)^2},\lambda_1 \right\}  |\, U_{i,b}^{1+\delta/2}\,|_6^2 \leq  |U_{i,b} |_{2+2\delta}^{1+\delta} |h_b|_2
\]
or, equivalently, 
\[
|U_{i,b}|_{6+3\delta}\leq \left(\frac{1}{C_6^2} \max\left\{ \frac{(1+\delta/2)^2}{1+\delta},\frac{1}{\lambda_1}   \right\} |h_b|_2\right)^\frac{1}{2+\delta}|U_{i,b}|_{2+2\delta}^{(1+\delta)/(2+\delta)}.
\]
Assuming, without loss of generality, that $|U_{i,b}|_{2+2\delta}\geq 1$, then from the fact that $(1+\delta)/(2+\delta)\leq 1$ we deduce that $|U_{i,b}|_{2+2\delta}^{(1+\delta)/(2+\delta)}\leq |U_{i,b}|_{2+2\delta}$. Moreover, from the $H^1(\R^N)$ bound of Step 1, there exists $C=C(\lambda_1)$ such that $|h_b|_2\leq C$ for every $b>\max\{\mu_1,\dots,\mu_d\}$. Thus we conclude the existence of $\kappa=\kappa(\lambda_1)$ such that
\[
|U_{i,b}|_{6+3\delta}\leq \left(\kappa \max\left\{ \frac{(1+\delta/2)^2}{1+\delta},\frac{1}{\lambda_1}   \right\} \right)^\frac{1}{2+\delta}|U_{i,b}|_{2+2\delta}.
\]
Now we iterate, by letting
\[
\delta(1)=0,\qquad 2+2\delta(k+1)=6+3\delta(k).
\]
Observe that $\delta(k)\to \infty$, since $\delta(k)\geq (3/2)^{k-2}, k\ge 2$. With this choice of $\delta$ in the previous estimate, we obtain the iterative relation
\[
|U_{i,b}|_{L^{2+2\delta(k+1)}} \le \left( \kappa \max\left\{\frac{(1+\delta / 2)^2}{1+\delta}, \lambda_i \right\} \right)^\frac{1}{2+\delta}  |U_{i,b}|_{2+2\delta(k)}
\]
which, together with $\delta(1)=0$, gives
\begin{eqnarray*}
|U_{i,b}|_{3+6\delta(k)} &=|U_{i,b}|_{2+2\delta(k+1)} \leq \displaystyle\prod_{j=1}^{k}\left[ \kappa \max\left\{\frac{(1+\delta / 2)^2}{1+\delta}, \lambda_i \right\} \right]^{\frac{1}{2+\delta(j)}} |U_{i,b}|_{2}\\
			&\leq \exp\left(\displaystyle \sum_{j=1}^\infty \frac{1}{2+\delta(j)}\log\left[ \kappa \max\left\{\frac{(1+\delta(j) / 2)^2}{1+\delta(j)}, \lambda_i \right\} \right]   \right)|U_{i,b}|_2
\end{eqnarray*}
As $\delta(j)\geq (3/2)^{j-2}, j\ge 2$, we see that
\[
\sum_{j=1}^\infty \frac{1}{2+\delta(j)}\log\left[ C \frac{\left(1+\delta(j)/2\right)^2}{1+\delta(j)} \right] < \infty,
\]
which provides the uniform bound in $L^\infty(\Omega)$

\medbreak

\noindent \emph{Step 3.} Take $i\geq 3$ such that $\lambda_d \geq \ldots \lambda_i >\Lambda:=dC$, where $C$ is the constant appearing in \eqref{eq:Brezis-Kato}. Take $j\in \{i,\ldots, d\}$. By multiplying the equation for $u_{j,b}$ by $u_{j,b}$ itself and recalling that $\mu_j/b\leq 1$ and \eqref{eq:Brezis-Kato} holds, we obtain
\begin{align*}
0\leq \int |\nabla u_{j,b}|^2 &\leq \int \mu_{j,b} u_{j,b}^4+\int b\, u_{j,b}^2 \sum_{k\neq j} u_{k,b}^2-\int \lambda_j u_{j,b}^2\\
					&\leq \int u_{j,b}^2 (dC-\lambda_j)\leq 0,
\end{align*}
which implies that $u_{j,b}\equiv 0$.
\end{proof}

\begin{proof}[Proof of Theorem \ref{Tbetanaoadmissivel2}]
Let $(u_1,\dots,u_d)\in G$. Define $c((\lambda_i;\mu_i;b_{ij})_{(i,j)\in P^2})$ to be the ground state level for the system 
$$
-\Delta w_i + \lambda_i w_i = \mu_i w_i^3 + \sum_{j\in P}b_{ij} w_j^2w_i, \ i\in P.
$$
Then
\begin{align*}
I(u_1,\dots,u_d)&=\min_{(w_1,\dots,w_d)\in\mathcal{N}_d} I(w_1,\dots,w_d) \le \underset{w_i=0,\ i\notin P}{\min_{(w_1,\dots,w_d)\in\mathcal{N}_d}} I(w_1,\dots,w_d)\\&=c((\lambda_i;\mu_i;b_{ij})_{(i,j)\in P^2}).
\end{align*}
Let $b=\min_{(i,j)\in P^2} b_{ij}$. From Lemma \ref{lemma:monotonicity_of_gsl} and a simple normalization argument,

$$c((\lambda_i;\mu_i;b_{ij})_{(i,j)\in P^2})\le c(\lambda_i;0,\dots,0;b,\dots, b)=b^{-1}c(\lambda_i;0,\dots,0;1,\dots,1).$$ 
Hence
$$
I(u_1,\dots,u_d)\le b^{-1}c(\lambda_i;0,\dots,0;1,\dots,1).
$$
Fix $i\notin P$ and suppose that $u_i\neq 0$. Then, for a constant $C>0$, independent of $b_{ij}$,
$$
\|u_i\|_{\lambda_i}^2 = \mu_i |u_i|_4^4 + \int u_i^2\mathop{\sum_{j=1}^k}_{j\neq i} b_{ij}u_j^2\le C \left(\mu_i\|u_i\|_{\lambda_i}^4 + \|u_i\|_{\lambda_i}^2\mathop{\sum_{j=1}^k}_{j\neq i}b_{ij}\|u_j\|_{\lambda_j}^2\right).
$$
Therefore
\begin{align*}
1&\le C\left(\mu_i \|u_i\|_{\lambda_i}^2 + \mathop{\sum_{j=1}^k}_{j\neq i}b_{ij}\|u_j\|_{\lambda_j}^2\right)\le 4C\left(\mu_i + \mathop{\sum_{j=1}^k}_{j\neq i}b_{ij}\right)I(u_1,\dots,u_d)\\
		&\le  4C\left(\mu_i + \mathop{\sum_{j=1}^k}_{j\neq i}b_{ij}\right) b^{-1}c(\lambda_i;0,\dots,0;1,\dots,1),
\end{align*}
which is absurd for $b$ sufficiently large.\end{proof}

\begin{proof}[Proof of Theorem \ref{Tnaoexistencia2}]
Consider a fully nontrivial ground state $(u_1,u_2,\dots,u_d)\in\mathcal{N}_d$. We will check that necessarily $b\geq 2^{1-\frac{d}{2}}\sqrt{\mu_1\mu_d}$.\\
With this intent, let us start by computing $t>0$ such that $(tu_1,0,\dots,0)\in\mathcal{N}_d$:
$$(tu_1,0,\dots,0)\in\mathcal{N}_d\Leftrightarrow t^2\|u_1\|_{\lambda_1}^2=t^4\mu_1|u_1|_4^4\Leftrightarrow t^2=\frac{\|u_1\|_{\lambda_1}^2}{\mu_1|u_1|_4^4}.$$ 
Since $(u_1,u_2,\dots,u_d)$ is a ground state, $I_d(u_1,u_2,\dots,u_d)\leq I_d(tu_1,0,\dots,0)$, that is
$$\frac 14\Big( \sum_{i=1}^d \mu_i|u_i|_{4}^{4}+2b \sum_{i<j}|u_iu_j|_2^2  \Big)\leq \frac 14\mu_1t^4|u_1|_4^4,$$
i.e.,
$$\mu_1|u_1|_4^4\Big(\sum_{i=1}^d \mu_i|u_i|_{4}^{4}+2b \sum_{i<j}|u_iu_j|_2^2\Big)\leq \|u_1\|_{\lambda_1}^4.$$
Multiplying the first line of system (\ref{sistema}) by $u_1$ and integrating then yields
$$\mu_1|u_1|_4^4\Big(\sum_{i=1}^d \mu_i|u_i|_{4}^{4}+2b \sum_{i<j}|u_iu_j|_2^2\Big)\leq \Big(\mu_1|u_1|_4^4+b\sum_{j=2}^d |u_1u_j|_2^2\Big)^2.$$
Hence
\begin{align*}
\mu_1|u_1|_4^4\sum_{i=2}^d\mu_i|u_i|_4^4+2b\mu_1|u_1|_4^4\sum_{i<j}|u_iu_j|_2^2&\leq 2b\mu_1|u_1|_4^4\sum_{j=2}^d |u_1u_j|_2^2\\&+b^2\Big(\sum_{j=2}^d |u_1u_j|_2^2\Big)^2,
\end{align*}
that is 
\begin{align*}
\mu_1|u_1|_4^4\sum_{i=2}^d\mu_i|u_i|_4^4+2b\mu_1|u_1|_4^4\sum_{1<i<j}|u_iu_j|_2^2&\leq b^2\Big(\sum_{j=2}^d |u_1u_j|_2^2\Big)^2\\&\leq  b^2\Big(\sum_{j=2}^d |u_1|_4^2|u_j|_4^2\Big)^2.
\end{align*}
Finally, 
$$\sum_{i=2}^d \mu_1\mu_i|u_i|_4^4\leq b^2\Big(\sum_{j=2}^d |u_j|_4^2\Big)^2\leq b^22^{d-2}\sum_{j=2}^d |u_j|_4^4.$$
From this inequality, we obtain that
$$\displaystyle b^22^{d-2}\geq \mu_1\frac{\displaystyle\sum_{i=2}^d \mu_i|u_i|_4^4}{\displaystyle\sum_{i=2}^d |u_i|_4^4}\geq \mu_1\min\{\mu_i\,:\,i\neq 1\}.$$ 
By interchanging the roles of $\mu_1$ and $\mu_j$, $j\geq 2$, we get that for all $j\in\{1,\dots,d\}$,$$b^22^{d-2}\geq \mu_j\min\{\mu_i\,:\,i\neq j\}.$$
In particular,
$$b\geq \frac{\sqrt{{\mu_1}\mu_d}}{2^{\frac d2-1}}.$$
\end{proof}
\end{section}

\section{Open Problems}\label{sec:OpenProblems}

Having established qualitatively under which conditions the ground states are either fully nontrivial or semitrivial, the next natural question would be to obtain the exact thresholds for this transition. As we referred, in the 2-equation case this was done by Mandel \cite{Mandel}. In such a case, the situation seems simpler, since one can use the well-known fact that the problem $-\Delta w+w=w^3$ has a unique  positive solution, up to translation (see \cite{Kwong}), and since there is a unique interaction parameter, $b_{12}=b_{21}$. 
\medbreak

Another challenging question would be, in the situations where we proved that the ground states are semitrivial, to determine if there are fully nontrivial solutions (bound states), obviously with higher levels of action. In the general case, only very particular results are known, namely when $b_{ij}$ are small, or for some particular combinations of $b_{ij}$ small and large. \\
For small $b_{ij}$ (weak cooperation) this was first shown in \cite{LinWei} (check Theorem 2) and also in \cite[Corollary 1.3]{SoaveTavares} (which also allows negative coefficients). Check also the papers \cite{LiuLiuChang,Mandel,WangWillem} for other existence results in the weak cooperation setting.\\
For existence results with simultaneous weak and strong cooperation, in \cite[Theorem 1.4\& 1.5]{SoaveTavares} it is proved for instance that there exists a positive solution in the following situation: divide the components in $m$ groups: $\{1,\ldots, d\}=\cup_{h=1}^m I_h$ with $I_h\cap I_k=\emptyset$ for $h\neq k$, and ask that:
\begin{itemize}
\item $\lambda_i\equiv \lambda_h$ for every $i\in I_h$, $h=1,\ldots, m$;
\item $b_{ij}\equiv b_h$ ``large'', for every $i,j\in I_h$ with $i\neq j$, $h=1,\ldots, m$;
\item $b_{ij}$ ``small'', for every $i\in I_h$, $j\in I_k$ with $h\neq k$.
\end{itemize} 
See also \cite[Section 4]{Colorado} for other results with large and small $b_{ij}$, of perturbative nature. \\
In some of these papers, it is shown that the solutions that are found minimize the action among the set of positive solutions. A complete description of the general picture is open.

\appendix

\section{Appendix}
Given $\lambda=(\lambda_1,\ldots, \lambda_d)$, $\mu=(\mu_1,\ldots, \mu_d)$ and $B=(b_{ij})_{i\neq j}$, denote by $c({\lambda,\mu,B})$ the ground state level of the system
\[
-\Delta u_i+\lambda_i u_i= \mu_i u_i^3+b u_i\sum_{j\neq i} u_j^2.
\]

We close this paper with a known fact regarding monotonicity of the ground state levels with respect to the parameters. Since we couldn't find a reference in the litterature covering all the cases we need, for the readers convenience we quickly state and prove the result here.
\begin{Lema}\label{lemma:monotonicity_of_gsl}
Take $\lambda, \tilde \lambda$, $\mu,\tilde \mu$ and $B, \tilde B$ such that $\lambda_i\leq \tilde \lambda_i$, $\tilde \mu_i\leq  \mu_i$ for every $i$, and $0< \tilde b_{ij}\leq b_{ij}$ for every $i\neq j$. Then
\[
c({\lambda,\mu,B})\leq c({\tilde \lambda,\tilde \mu,\tilde B}).
\]
\end{Lema}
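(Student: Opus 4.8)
The plan is to prove the monotonicity by comparing the two ground state levels through their Nehari manifold characterizations. Recall from the introduction that, for any choice of parameters, the ground state level equals $\inf\{I(\mathbf{u}) : \mathbf{u}\in\mathcal{N}\}$, and that on the Nehari manifold $I(\mathbf{u})=\frac14\sum_i\|u_i\|_{\lambda_i}^2$. The natural strategy is: take a ground state $\tilde{\mathbf{u}}$ for the system with parameters $(\tilde\lambda,\tilde\mu,\tilde B)$, which realizes $c(\tilde\lambda,\tilde\mu,\tilde B)$; then rescale it by a scalar $t>0$ so that $t\tilde{\mathbf{u}}$ lands on the Nehari manifold $\mathcal{N}_{\lambda,\mu,B}$ associated to the smaller parameters; and finally estimate $I_{\lambda,\mu,B}(t\tilde{\mathbf{u}})$ from above by $I_{\tilde\lambda,\tilde\mu,\tilde B}(\tilde{\mathbf{u}})$.

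First I would write down the scalar $t$ explicitly. For a fixed nonzero $\mathbf{u}$, the condition $t\mathbf{u}\in\mathcal{N}_{\lambda,\mu,B}$ reads
\[
t^2\sum_{i}\|u_i\|_{\lambda_i}^2 = t^4\Big(\sum_i\mu_i|u_i|_4^4 + 2\sum_{i<j}b_{ij}|u_iu_j|_2^2\Big),
\]
so $t^2 = \dfrac{\sum_i\|u_i\|_{\lambda_i}^2}{\sum_i\mu_i|u_i|_4^4 + 2\sum_{i<j}b_{ij}|u_iu_j|_2^2}$. Now apply this with $\mathbf{u}=\tilde{\mathbf{u}}$. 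Using $\lambda_i\le\tilde\lambda_i$ we have $\|\tilde u_i\|_{\lambda_i}^2\le\|\tilde u_i\|_{\tilde\lambda_i}^2$ for each $i$, so the numerator is at most $\sum_i\|\tilde u_i\|_{\tilde\lambda_i}^2$; using $\tilde\mu_i\le\mu_i$ and $\tilde b_{ij}\le b_{ij}$ (together with $\tilde{\mathbf{u}}$ having nonnegative — indeed real — components, so all the quantities $|\tilde u_i|_4^4$ and $|\tilde u_i\tilde u_j|_2^2$ are nonnegative), the denominator is at least $\sum_i\tilde\mu_i|\tilde u_i|_4^4 + 2\sum_{i<j}\tilde b_{ij}|\tilde u_i\tilde u_j|_2^2 = \sum_i\|\tilde u_i\|_{\tilde\lambda_i}^2$, the last equality because $\tilde{\mathbf{u}}\in\mathcal{N}_{\tilde\lambda,\tilde\mu,\tilde B}$. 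Hence $t^2\le 1$.

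Finally I would conclude: since $t\tilde{\mathbf{u}}\in\mathcal{N}_{\lambda,\mu,B}$,
\[
c(\lambda,\mu,B)\le I_{\lambda,\mu,B}(t\tilde{\mathbf{u}}) = \frac{t^2}{4}\sum_i\|\tilde u_i\|_{\lambda_i}^2 \le \frac{t^2}{4}\sum_i\|\tilde u_i\|_{\tilde\lambda_i}^2 \le \frac14\sum_i\|\tilde u_i\|_{\tilde\lambda_i}^2 = I_{\tilde\lambda,\tilde\mu,\tilde B}(\tilde{\mathbf{u}}) = c(\tilde\lambda,\tilde\mu,\tilde B),
\]
which is exactly the claim. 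There is essentially no real obstacle here; the only mild subtlety is making sure the denominator in the formula for $t^2$ is strictly positive so that $t$ is well-defined (this holds because $\mathbf{u}\neq\mathbf{0}$ forces at least one $|u_i|_4^4>0$, and $\mu_i>0$), and being careful that all cross terms $|u_iu_j|_2^2$ are nonnegative so that decreasing the coefficients $b_{ij}$ genuinely decreases the denominator. One could also phrase the argument purely at the level of the functional inequality $I_{\lambda,\mu,B}(\mathbf{u})\le$ (a modified expression) without introducing $t$, but the Nehari rescaling is the cleanest route and is already the workhorse of the other proofs in the paper.
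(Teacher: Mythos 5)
Your proof is correct and takes essentially the same approach as the paper: pick a minimizer $\tilde{\mathbf u}$ for the system with parameters $(\tilde\lambda,\tilde\mu,\tilde B)$, project it onto $\mathcal{N}_{\lambda,\mu,B}$ by the standard rescaling, and compare energies. The only cosmetic difference is that you compute the rescaling factor $t$ explicitly and check $t\le 1$, whereas the paper packages the same comparison via the pointwise inequality $I_{\lambda,\mu,B}\le I_{\tilde\lambda,\tilde\mu,\tilde B}$ together with the Nehari characterization $I(\tilde{\mathbf u})=\max_{t>0}I(t\tilde{\mathbf u})$.
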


\begin{proof}
This has already been observed partially in \cite{Correia3}. Denote by $I_{\lambda,\mu,B}$ and $N_{\lambda,\mu,B}$ the associated action functional and Nehari manifold to the level $c({\lambda,\mu,B})$. Observe that $I_{\lambda,\mu,B}(u)\leq I_{\tilde \lambda,\tilde \mu,\tilde B}(u)$ for every $u\in (H^1(\R^N))^d$. Take $\mathbf{u}^*$ achieving $c({\tilde \lambda,\tilde \mu,\tilde B})$. Then it is straightforward to check that there exists a unique $t^*>0$ such that $t^*\mathbf{u}^*\in \mathcal{N}_{\lambda,\mu,B}$. Then
\begin{align*}
c({\tilde \lambda,\tilde \mu,\tilde B})&=I_{\tilde \lambda,\tilde \mu,\tilde B}(\mathbf{u}^*)=\max_{t>0} I_{\tilde \lambda,\tilde \mu,\tilde B}(t \mathbf{u}^*) \\
				&\geq \max_{t>0} I_{\lambda,\mu,B}(t \mathbf{u}^*)=I_{\lambda,\mu,B}(t^* u^*) \geq \inf_{\mathcal{N}_{\lambda,\mu,B}} I_{\lambda,\mu,B}=c({\lambda,\mu,B}).
\end{align*}
\end{proof}

\noindent \textbf{Acknowledgements.} 
The authors would like to thank Nicola Soave for his comments on a previous version of this paper.\\
Sim\~ao Correia was partially supported by Funda\c c\~ao
para a Ci\^encia e Tecnologia, 
through the grant 
SFRH/BD/96399/2013
and through contract UID/\-MAT/\-04561/\-2013.\\
Filipe Oliveira was partially supported by Funda\c c\~ao
para a Ci\^encia e Tecnologia, through contract UID/MAT/00297/2013.\\
Hugo Tavares was partially supported by Funda\c c\~ao para a Ci\^encia e Tecnologia through the program Investigador FCT and the project PEst-OE/\-EEI/\-LA0009/\-2013, as well as by the ERC Advanced Grant 2013 n.339958 ``Complex Patterns for Strongly Interacting Dynamical Systems - COMPAT''.


\small
\noindent \textsc{Sim\~ao Correia}\\
CMAF-CIO and FCUL \\
\noindent Campo Grande, Edif\'icio C6, Piso 2, 1749-016 Lisboa (Portugal)\\
\verb"sfcorreia@fc.ul.pt"\\

\noindent \textsc{Filipe Oliveira}\\
Mathematics Department, FCT-UNL\\
Centro de Matemática e Aplicações\\
NOVA University of Lisbon\\
Caparica Campus, 2829-516 (Portugal)\\
\verb"fso@fct.unl.pt"\\

\noindent \textsc{Hugo Tavares}\\
CAMGSD, Mathematics Department\\
Instituto Superior T\'ecnico, Universidade de Lisboa\\
Av. Rovisco Pais, 1049-001 Lisboa (Portugal)\\
\verb"htavares@math.ist.utl.pt"


\begin{thebibliography}{10}

\bibitem{AmbrosettiColorado2}
Antonio Ambrosetti and Eduardo Colorado.
\newblock Bound and ground states of coupled nonlinear {S}chr\"odinger
  equations.
\newblock {\em C. R. Math. Acad. Sci. Paris}, 342(7):453--458, 2006.

\bibitem{AmbrosettiColorado}
Antonio Ambrosetti and Eduardo Colorado.
\newblock Standing waves of some coupled nonlinear {S}chr{\"o}dinger equations.
\newblock {\em J. Lond. Math. Soc. (2)}, 75(1):67--82, 2007.

\bibitem{BartschWang}
Thomas Bartsch and Zhi-Qiang Wang.
\newblock Note on ground states of nonlinear {S}chr\"odinger systems.
\newblock {\em J. Partial Differential Equations}, 19(3):200--207, 2006.

\bibitem{Chang}
Jinyong Chang.
\newblock Note on ground states of a nonlinear {S}chr{\"o}dinger system.
\newblock {\em J. Math. Anal. Appl.}, 381(2):957--962, 2011.

\bibitem{Colorado}
Eduardo Colorado.
\newblock Positive solutions to some systems of coupled nonlinear
  {S}chr\"odinger equations.
\newblock {\em Nonlinear Anal.}, 110:104--112, 2014.

\bibitem{Correia1}
Sim{\~a}o Correia.
\newblock Characterization of ground-states for a system of $m$ coupled
  semilinear {S}chr{\"o}dinger equations and applications.
\newblock arXiv: 1410.7993, 2014.

\bibitem{Correia3}
Sim{\~a}o Correia.
\newblock Ground-states for systems of $m$ coupled semilinear {S}chr{\"o}dinger
  equations with attraction-repulsion effects: characterization and
  perturbation results.
\newblock arXiv: 1506.07758, 2015.

\bibitem{Kwong}
Man~Kam Kwong.
\newblock Uniqueness of positive solutions of {$\Delta u-u+u^p=0$} in {${\bf
  R}^n$}.
\newblock {\em Arch. Rational Mech. Anal.}, 105(3):243--266, 1989.

\bibitem{LinWei}
Tai-Chia Lin and Juncheng Wei.
\newblock Ground state of {$N$} coupled nonlinear {S}chr{\"o}dinger equations
  in {$\bold R^n$}, {$n\leq 3$}.
\newblock {\em Comm. Math. Phys.}, 255(3):629--653, 2005 [Erratum in Comm.
  Math. Phys. 277, 573--576 (2008) ].

\bibitem{LiuLiuChang}
Haidong Liu, Zhaoli Liu, and Jinyong Chang.
\newblock Existence and uniqueness of positive solutions of nonlinear
  {S}chr{\"o}dinger systems.
\newblock {\em Proc. Roy. Soc. Edinburgh Sect. A}, 145(2):365--390, 2015.

\bibitem{LiuWang}
Zhaoli Liu and Zhi-Qiang Wang.
\newblock Ground states and bound states of a nonlinear {S}chr{\"o}dinger
  system.
\newblock {\em Adv. Nonlinear Stud.}, 10(1):175--193, 2010.

\bibitem{MaiaMontefuscoPellacci}
L.~A. Maia, E.~Montefusco, and B.~Pellacci.
\newblock Positive solutions for a weakly coupled nonlinear {S}chr\"odinger
  system.
\newblock {\em J. Differential Equations}, 229(2):743--767, 2006.

\bibitem{Mandel}
Rainer Mandel.
\newblock Minimal energy solutions for cooperative nonlinear {S}chr{\"o}dinger
  systems.
\newblock {\em NoDEA Nonlinear Differential Equations Appl.}, 22(2):239--262,
  2015.

\bibitem{OliveiraTavares}
Filipe Oliveira and Hugo Tavares.
\newblock Ground states for a nonlinear {S}chr{\"o}dinger system with sublinear
  coupling terms.
\newblock arXiv:1504.04655, 2015.

\bibitem{Sirakov}
Boyan Sirakov.
\newblock Least energy solitary waves for a system of nonlinear
  {S}chr{\"o}dinger equations in {$\Bbb R^n$}.
\newblock {\em Comm. Math. Phys.}, 271(1):199--221, 2007.

\bibitem{Soave}
Nicola Soave.
\newblock On existence and phase separation of solitary waves for nonlinear
  {S}chr{\"o}dinger systems modelling simultaneous cooperation and competition.
\newblock {\em Calc. Var. Partial Differential Equations}, 53(3-4):689--718,
  2015.

\bibitem{SoaveTavares}
Nicola Soave and Hugo Tavares.
\newblock New existence and symmetry results for least energy positive
  solutions of {S}chr{\"o}dinger systems with mixed competition and cooperation
  terms.
\newblock arXiv:1412.4336, 2014.

\bibitem{Timmermans}
E.~Timmermans.
\newblock Phase separation of {B}ose-{E}instein condensates.
\newblock {\em Physical Review Letters}, 81(26):5718--5721, 1998.

\bibitem{WangWillem}
Zhi-Qiang Wang and Michel Willem.
\newblock Partial symmetry of vector solutions for elliptic systems.
\newblock {\em J. Anal. Math.}, 122:69--85, 2014.

\end{thebibliography}
\end{document}